\documentclass[a4paper,12pt]{article}
\usepackage{amsmath} \usepackage{amssymb} 
\usepackage{eucal}
\usepackage{geometry}
\usepackage{graphicx}
\geometry{margin=2.5cm}

\usepackage[amsmath,thmmarks]{ntheorem}
\theoremstyle{change}
\theoremseparator{.}

\theorembodyfont{\normalfont\slshape}
\newtheorem{theorem}{Theorem}[section]
\newtheorem{corollary}[theorem]{Corollary}
\newtheorem{lemma}[theorem]{Lemma}
\newtheorem{proposition}[theorem]{Proposition}

\theorembodyfont{\normalfont}

\newtheorem{remark}[theorem]{Remark}

\def\proofsymbol{\rule{0.5em}{0.5em}}

\theoremheaderfont{\normalfont\itshape\bfseries}
\theoremstyle{nonumberplain}
\theoremsymbol{\enspace\ensuremath{{\proofsymbol}}}
\newtheorem{proof}{Proof}

\theoremstyle{empty}

\usepackage[shortlabels]{enumitem}
\setlist{
  listparindent=\parindent,
  parsep=0pt,
  itemsep=0.5em plus 0.25em minus 0.2em,
}

\setenumerate{
leftmargin=*,
labelsep=0.5em,
}
\setitemize{
leftmargin=*,
labelsep=0.5em,
}

\SetEnumerateShortLabel{s}{\textup{($\sigma$\arabic*)}}
\SetEnumerateShortLabel{a}{\textup{(\alph*)}}
\SetEnumerateShortLabel{A}{\textup{(\Alph*)}}
\SetEnumerateShortLabel{i}{\textup{(\roman*)}}
\SetEnumerateShortLabel{I}{\textup{(\Roman*)}}
\SetEnumerateShortLabel{m}{\textup{(m\arabic*)}}
\SetEnumerateShortLabel{d}{\textup{($\delta$\arabic*)}}
\SetEnumerateShortLabel{p}{\textup{(pm\arabic*)}}
\SetEnumerateShortLabel{1}{\textup{(\arabic*)}}
\SetEnumerateShortLabel{j}{\textup{(i\arabic*)}}
\SetEnumerateShortLabel{n}{\textup{(n\arabic*)}}

\newlist{lenumerate}{enumerate}{3}
\setlist[lenumerate]{
  listparindent=\parindent,
  itemsep=0.5em plus 0.25em minus 0.3em,
  fullwidth,
  labelsep=0.75em,
  label=\arabic*.
}

\numberwithin{equation}{section} 


\def\C{{\mathbb C}}

\def\N{{\mathbb N}}
\def\R{{\mathbb R}}

\def\CC{{\mathcal C}}

\def\CG{{\mathcal G}}

\def\CL{{\mathcal L}}

\def\CR{{\mathcal R}}
\def\CS{{\mathcal S}}

\def\phi{\varphi}

\def\qand{{\quad\mbox{and}\quad}}

\def\d{{\rm d}}

\def\1{{{\ae}}}
\def\2{{{\o}}}
\def\3{{{\aa}}}
\def\6{\, {\rm d}}

\def\ri{{\rm i}}
\def\e{{\rm e}}
\def\<{{\langle}}
\def\>{{\rangle}}
\def\brinv{{\langle -1\rangle}}

\def\ID{\mathcal{ID}}

\def\im{{\sf Im}}
\def\re{{\sf Re}}
\def\supp{{\sf supp}}


\begin{document}

\title{Unimodality of the freely selfdecomposable probability laws}

\author{Takahiro Hasebe \and Steen Thorbj{\o}rnsen}

\date{}

\maketitle

\begin{abstract}
We show that any freely selfdecomposable probability law is unimodal.
This is the free probabilistic analog of Yamazato's result in
[Ann.~Probab.~{\bf 6} (1978), 523-531].  

\vspace{2mm}

\noindent
Key words: free probability; free convolution; free selfdecomposability; unimodality; Yamazato's theorem

\vspace{2mm}

\noindent
Mathematics Subject Classification 2010: 46L54
\end{abstract}
\section{Introduction}\label{intro}

A.~Ya.~Khintchine introduced the class $\CL$ of limit distributions of certain
independent triangular arrays. It plays an important role in
statistics and mathematical finance, mainly as a consequence of the following
characterization established by P.~L\'evy in 1937: A (Borel-)
probability measure $\mu$ belongs to $\CL$,
if and only if there exists, for any constant $c$ in $(0,1)$,
a probability measure $\mu_c$ on $\R$, such that
\begin{equation}\label{SDa}
\mu=D_c\mu*\mu_c.
\end{equation} 
 Here $D_c\mu$ is the push-forward of $\mu$ by the map $x\mapsto c x$ and $*$ denotes (classical) convolution of probability measures. To distinguish from the corresponding class $\CL(\boxplus)$ in free probability
(described below) we shall
henceforth write $\CL(*)$ instead of just $\CL$. As a result of
L\'evy's characterization the measures in $\CL(*)$ are called
\emph{selfdecomposable}. 
The class $\CL(*)$ contains in particular the
class $\CS(*)$ of stable probability measures on $\R$ as a proper
subclass (see e.g.\ \cite{Sa}).  

A probability measure $\mu$ on $\R$ is called {\it unimodal}, if, for
some $a$ in $\R$, it has the form
\begin{equation}\label{UMa}
\mu(\d t)=\mu(\{a\})\delta_a(\d t)+f(t)\6t,
\end{equation}
where $f\colon\R\to\R$ is increasing (meaning that $x \leq y$ implies $f(x) \leq f(y)$) on $(-\infty,a)$ and decreasing (meaning that $x \leq y$ implies $f(x) \geq f(y)$) on
$(a,\infty)$, and where $\delta_a$ denotes the Dirac measure at $a$.
The problem of unimodality of the measures in $\CL(*)$ emerged
in the 1940's. Already in the original 1949 Russian 
edition of the fundamental book \cite{GnKo} by B.V.~Gnedenko and
A.N.~Kolmogorov it was claimed that all
selfdecomposable distributions are unimodal. However, as explained in
the English translation
\cite{GnKo} (by K.~L.~Chung) there was an error in the proof, and it
took almost 30 
years before a correct proof was obtained by M.~Yamazato in
1978 (see \cite{Ya}). In the appendix to the paper \cite{bp2} from
1999 it was proved by P.~Biane that all measures in the class
$\CS(\boxplus)$ of stable measures with respect to free additive
convolution $\boxplus$ (see Section~\ref{background}) are unimodal. 
In the present paper we
extend this result to the class $\CL(\boxplus)$ of all selfdecomposable
distributions with respect to $\boxplus$; thus establishing a full
free probability analog of Yamazato's result.

In the paper \cite{ht7} it 
was proved by U.~Haagerup and the second named author
that the free analogs of the Gamma distributions (which are
contained in $\CL(\boxplus)\setminus\CS(\boxplus)$) are unimodal, and
the present paper is based in part on techniques from that paper. Let
us also point out that several results from Section~\ref{compact_case}
in the present paper
(most notably Lemma~\ref{P_is_homeomorphism}) may be extracted from
the more general and somewhat differently oriented theory developed
in the papers \cite{Hu1}-\cite{Hu2} by H.-W.~Huang. We prefer in the
present paper to give a completely self-contained and elementary
exposition in the specialized setup considered here. In particular our
approach does not depend upon the rather deep complex analysis
considered in Huang's papers and originating in the work of S.T.~Belinschi
and H.~Bercovici (see e.g.\ \cite{BB05}). 

The remainder of the paper is organized as follows: In
Section~\ref{background} we provide background material on $\boxplus$-infinite
divisibility, the Bercovici-Pata bijection, selfdecomposability and
unimodality. In
Section~\ref{compact_case} we establish unimodality for probability
measures in $\CL(\boxplus)$ satisfying in particular that the
corresponding L\'evy 
measure has a strictly positive $C^2$-density on $\R\setminus\{0\}$. In
Section~\ref{general_case}
we extend the unimodality result from such measures to general measures in
$\CL(\boxplus)$, using that unimodality is preserved under weak
limits.

\section{Background}\label{background}

\subsection{Free and classical infinite divisibility}

A (Borel-) probability measure $\mu$ on
${\mathbb R}$ is called infinitely divisible, if there exists, for each
positive integer $n$, a probability measure $\mu^{1/n}$ on $\R$, such that
\begin{equation}
\mu=\underbrace{\mu^{1/n}*\mu^{1/n}*\cdots*\mu^{1/n}}_{n \ \textrm{terms}},
\label{eqPL.1}
\end{equation}
where $*$ denotes the usual convolution of probability measures (based
on classical independence). We denote by $\ID(*)$ the class of all
such measures on $\R$. We recall that a probability measure $\mu$ on
${\mathbb R}$ is infinitely divisible, if and only if its
characteristic function (or Fourier transform)
$\hat{\mu}$ has the L\'evy-Khintchine representation:
\begin{equation}
\hat{\mu}(u)=\exp\Big[{\rm i}\eta u - {\textstyle\frac{1}{2}}au^2 + 
\int_{{\mathbb R}}\big({\rm e}^{{\rm i}ut}-1-{\rm i}ut 1_{[-1,1]}(t)\big) \
\rho({\rm d}t)\Big], \qquad (u\in{\mathbb R}),
\label{e0.10b}
\end{equation}
where $\eta$ is a real constant, $a$ is a non-negative constant and
$\rho$ is a L\'evy measure on ${\mathbb R}$, meaning that
\[
\rho(\{0\})=0, \quad \textrm{and} \quad \int_{{\mathbb R}}\min\{1,t^2\} \
\rho({\rm d}t)<\infty.
\]
The parameters $a$, $\rho$ and $\eta$
are uniquely determined by $\mu$ and the triplet $(a,\rho,\eta)$ is called the
{\it characteristic triplet} for $\mu$. Alternatively the
L\'evy-Khintchine representation may be written in the form:
\begin{equation}
\hat{\mu}(u)=\exp\Big[{\rm i}\gamma u +
\int_{{\mathbb R}}\Big({\rm e}^{{\rm i}ut}-1-\frac{{\rm
    i}ut}{1+t^2}\Big)\frac{1+t^2}{t^2} \ \sigma({\rm d}t)\Big], 
\quad (u\in{\mathbb R}),
\label{e0.10a}
\end{equation}
where $\gamma$ is a real constant, $\sigma$ is a finite measure on
${\mathbb R}$ and $(\gamma,\sigma)$ is called the \emph{generating
  pair} for $\mu$. 
The relationship between the representations \eqref{e0.10a} and
\eqref{e0.10b} is as follows:
\begin{equation}
\begin{split}
a&=\sigma(\{0\}), \\[.2cm]
\rho({\rm d}t)&=\frac{1+t^2}{t^2}\cdot 1_{{\mathbb R}\setminus\{0\}}(t) \
\sigma({\rm d}t), \\[.2cm]
\eta&=\gamma+\int_{{\mathbb R}}t\Big(1_{[-1,1]}(t)-\frac{1}{1+t^2}\Big) \
\rho({\rm d}t).
\end{split}
\label{ligning3}
\end{equation}

For two probability measures $\mu$ and $\nu$ on $\R$, the free
convolution $\mu\boxplus\nu$ is defined as the spectral distribution
of $x+y$, 
where $x$ and $y$ are \emph{freely independent} (possibly unbounded)
selfadjoint operators on a Hilbert space with spectral distributions
$\mu$ and $\nu$, respectively (see \cite{BV} for further details).
The class $\ID(\boxplus)$ of infinitely divisible probability measures
with respect to free convolution $\boxplus$ is defined by replacing
classical convolution $*$ by free convolution $\boxplus$ in
\eqref{eqPL.1}. 

For a (Borel-) probability measure $\mu$ on $\R$ with support
$\supp(\mu)$, the Cauchy (or Stieltjes) transform is the mapping
$G_\mu\colon\C\setminus\supp(\mu)\to\C$ defined by: 
\begin{equation}
G_\mu(z)=\int_{\R}\frac{1}{z-t}\,\mu(\d t),
\qquad(z\in\C\setminus\supp(\mu)).
\label{eqPL.6}
\end{equation}
The \emph{free cumulant transform} $\CC_\mu$ of $\mu$ is then given by
\begin{equation}
\CC_\mu(z)=zG_\mu^\brinv(z)-1
\label{eqPL.6a}
\end{equation}
for all $z$ in a certain region $R$ of $\C^-$ (the lower half complex
plane), where the (right) inverse 
$G_\mu^\brinv$ of $G_\mu$ is well-defined. Specifically $R$ may be
chosen in the form:
\[
R=\{z\in\C^-\mid \tfrac{1}{z}\in\Delta_{\eta,M}\}, \quad\text{where}\quad
\Delta_{\eta,M}=\{z\in\C^+\mid |\re(z)|<\eta\im(z), \ \im(z)>M\}
\]
for suitable positive numbers $\eta$ and $M$, where $\C^+$ denotes the
upper half complex plane.
It was proved in \cite{BV} (see also
\cite{ma} and \cite{vo2}) that
$\CC_\mu$ constitutes the free analog of $\log\hat{\mu}$ in the sense
that it linearizes free convolution:
\[
\CC_{\mu\boxplus\nu}(z)=\CC_\mu(z)+\CC_{\nu}(z)
\]
for all probability measures $\mu$ and $\nu$ on $\R$ and all $z$ in a
region where all three transforms are defined. 
The results in \cite{BV} are presented in terms of a variant,
$\phi_\mu$, of $\CC_\mu$, which is often referred to as the Voiculescu
transform, and which is again a variant of the $R$-transform $\CR_\mu$
introduced in \cite{vo2}. The relationship is the following: 
\begin{equation}
\phi_{\mu}(z)=\CR_\mu(\tfrac{1}{z})=z\CC_{\mu}(\tfrac{1}{z})
\label{eqPL.6b}
\end{equation}
for all $z$ in a region $\Delta_{\eta,M}$ as above.
In \cite{BV} it was
proved additionally that $\mu\in\ID(\boxplus)$, if and only if
$\phi_\mu$ extends analytically to a map from $\C^+$  into $\C^- \cup
\R$, in which case 
there exists a real constant $\gamma$ and a finite measure $\sigma$ on
$\R$, such that $\phi_\mu$ has the 
\emph{free L\'evy-Khintchine representation}:
\begin{equation}
\label{e1.1a}
\phi_{\mu}(z)=\gamma+\int_{{\mathbb R}}\frac{1+tz}{z-t}\, \sigma({\rm
  d}t),
\qquad (z\in{\mathbb C}^+).
\end{equation}
The pair $(\gamma,\sigma)$ is uniquely determined and is called the
\emph{free generating pair} for $\mu$. 
In terms of the free cumulant transform $\CC_\mu$ the free
L\'evy-Khintchine representation may be written as
\begin{equation}
\mathcal{C}_{\mu}(z) = \eta z+ az^2 + 
\int_{{\mathbb R}}\Big(\frac{1}{1-tz}-1-tz1_{[-1,1]}(t)\Big)
\ \rho({\rm d}t),
\label{eqPL.2}
\end{equation}
where the relationship between the \emph{free characteristic triplet}
$(a,\rho,\eta)$ and the free generating pair $(\gamma,\sigma)$
is again given by \eqref{ligning3}.

In \cite{bp2} Bercovici and Pata introduced a bijection
$\Lambda$ between the two classes $\ID(*)$ and $\ID(\boxplus)$, which
may formally be defined as the mapping sending a measure $\mu$ from
$\ID(*)$ with generating pair $(\gamma,\sigma)$ onto the measure
$\Lambda(\mu)$ in $\ID(\boxplus)$ with \emph{free} generating pair
$(\gamma,\sigma)$. It is then obvious that $\Lambda$ is a bijection, and
it turns out that $\Lambda$ further enjoys the following properties (see
\cite{bp2} and \cite{B-NT02}): 

\begin{enumerate}[a]

\item If $\mu_1,\mu_2\in{\mathcal{ID}}(*)$, then
  $\Lambda(\mu_1*\mu_2)=\Lambda(\mu_1)\boxplus\Lambda(\mu_2)$.

\item If $\mu\in{\mathcal{ID}}(*)$ and $c\in{\mathbb R}$, then
  $\Lambda(D_c\mu)=D_c\Lambda(\mu)$. 

\item For any constant $c$ in ${\mathbb R}$ we have that
  $\Lambda(\delta_c)=\delta_c$, where $\delta_c$ denotes the Dirac
  measure at $c$.

\item $\Lambda$ is a homeomorphism with respect to weak convergence.

\end{enumerate}

The property (d) is equivalent to the free version of Gnedenko's
Theorem: Suppose $\mu,\mu_1,\mu_2,\mu_3,\ldots$ is a sequence of measures
from $\ID(\boxplus)$ with \emph{free} generating pairs:
$(\gamma,\sigma),(\gamma_1,\sigma_1),(\gamma_2,\sigma_2),(\gamma_3,\sigma_3),
\ldots$, respectively. Then 
\begin{equation}
\mu_n\overset{\rm w}{\longrightarrow}\mu \iff
\gamma_n\longrightarrow\gamma \ \text{and} \ \sigma_n\overset{\rm
w}{\longrightarrow}\sigma. 
\label{free_Gnedenko_thm}
\end{equation}
(cf.\ Theorem~3.8 in \cite{B-NT02}) 

\subsection{Selfdecomposability and Unimodality}
The selfdecomposablity defined in \eqref{SDa} has an equivalent characterization: a probability measure $\mu$ is in $\CL(*)$ if and only if $\mu$ is in $\ID(*)$ and the L\'evy measure (cf.\ \eqref{e0.10b})
has the form 
\begin{equation}\label{spectral}
\rho(\d t)=\frac{k(t)}{|t|}\6t, 
\end{equation}
where $k\colon\R\setminus\{0\}\to[0,\infty)$ is increasing on $(-\infty,0)$ and decreasing on $(0,\infty)$ (see \cite{Sa}).

In analogy with the class $\CL(*)$,
a probability measure $\mu$ on $\R$ is called
$\boxplus$-\emph{selfdecomposable}, if there exists, for any $c$ in
$(0,1)$, a probability measure $\mu_c$ on $\R$, such that
\begin{equation}
\mu=D_c\mu\boxplus\mu_c. 
\end{equation}
Denoting by $\CL(\boxplus)$ the class of
such measures, it follows from the properties of $\Lambda$ that
\begin{equation}
 \Lambda(\CL(*))=\CL(\boxplus)
\label{eq9}
\end{equation}
(see \cite{B-NT02}). By the definition of $\Lambda$ and \eqref{eq9}, if we let the
term ``L\'evy measure'' refer to the free L\'evy-Khintchine 
representation \eqref{eqPL.2} rather than the classical one
\eqref{e0.10b}, 
 then we have exactly the same characterization of the measures in $\CL(\boxplus)$: a probability measure $\mu$ is in $\CL(\boxplus)$ if and only if its L\'evy measure in \eqref{eqPL.2} is of the form \eqref{spectral}. 
 
The definition of a unimodal probability measure $\mu$ given in
Section~\ref{intro} is equivalent to the existence of a real number
$a$, such that the distribution function $t\mapsto\mu((-\infty,t])$ is 
convex (i.e.\ $\mu((-\infty, p s+ q t]) \leq p \mu((-\infty,s])+q\mu((-\infty,t])$ for all $s,t$ and all $ p,q\geq 0, p+q=1$) on $(-\infty,a)$ and concave on $(a,\infty)$. From this
characterization it follows that for any sequence
$\mu,\mu_1,\mu_2,\mu_3,\ldots$ of 
probability measures on $\R$ we have the implication:
\begin{equation}
\text{$\mu_n$ is unimodal for all $n$ and $\mu_n\overset{\rm
    w}{\longrightarrow}\mu$}  \ \Longrightarrow \
\text{$\mu$ is unimodal}
\label{unimodal_vs_weak_conv}
\end{equation}
(see e.g.\ \cite[\S32, Theorem~4]{GnKo}).

\subsection{Lindel\"of's Theorem}

In this subsection we present a variant (Lemma~\ref{Lindeloef} below)
of Lindel\"of's Theorem (see \cite{Li} or \cite[Theorem~2.2]{CoLo}),
which plays a crucial role in
Section~\ref{compact_case} in combination with Stieltjes inversion.
Before stating the lemma we introduce some notation:
For any number $\delta$ in $(0,\pi)$ we put
\[
\bigtriangledown_\delta=\big\{r\e^{\ri\theta}\bigm|
\delta<\theta<\pi-\delta, \ r>0\big\}.
\]

\begin{lemma}\label{Lindeloef}
Let $G\colon\C^+\to\C^-$ be an analytic function, and assume that
there exists a curve $(z_t)_{t\in[0,1)}$ in $\C^+$, such that
$\lim_{t\to1}z_t=0$, and such that $\alpha:=\lim_{t\to1}G(z_t)$ exists in $\C$.
Then for any number $\delta$ in $(0,\pi)$ we also have that
$
\lim_{z\to0, z\in\bigtriangledown_\delta}G(z)=\alpha,
$
i.e., $G$ has non-tangential limit $\alpha$ at $0$.
\end{lemma}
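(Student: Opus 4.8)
The plan is to reduce the statement to the classical Lindel\"of theorem for \emph{bounded} analytic functions on the unit disc $\DD$, transferring the problem by conformal maps in both the domain and the range. Two features separate the present hypotheses from the textbook form: the target of $G$ is the unbounded region $\C^-$ rather than a bounded set, and the distinguished boundary point is $0$ with the cones $\bigtriangledown_\delta$ playing the role of Stolz angles. Both are removed by composing with suitable M\"obius transformations.

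First I would fix the Cayley-type map $\Phi(w)=\ri\frac{1-w}{1+w}$, a conformal bijection of $\DD$ onto $\C^+$ sending the boundary point $1$ to $0$, and the map $\Psi(w)=\frac{w+\ri}{w-\ri}$, a conformal bijection of $\C^-$ onto $\DD$ (since $|w+\ri|<|w-\ri|$ exactly when $\im(w)<0$) whose only pole, at $\ri$, lies outside $\overline{\C^-}$. Then $F:=\Psi\circ G\circ\Phi$ is analytic on $\DD$ with values in $\DD$, hence bounded. Setting $w_t:=\Phi^{-1}(z_t)$ produces a curve in $\DD$ with $\lim_{t\to1}w_t=\Phi^{-1}(0)=1$, and along it $F(w_t)=\Psi(G(z_t))\to\Psi(\alpha)$ as $t\to1$; here $\Psi(\alpha)$ is well defined and lies in $\overline{\DD}$ because the range of $G$ forces $\alpha\in\overline{\C^-}=\C^-\cup\R$ and $\alpha\neq\ri$.

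By the classical Lindel\"of theorem applied to the bounded analytic function $F$ on $\DD$, with the arc $w_t$ terminating at $1$ and asymptotic value $\Psi(\alpha)$, $F$ has non-tangential (angular) limit $\Psi(\alpha)$ at $1$. It then remains to pull this back. Since $\Phi$ is a M\"obius transformation sending $1$ to $0$, it carries Stolz angles at $1$ onto regions that, near $0$, exhaust each cone: concretely one checks that for every $\delta\in(0,\pi)$ there is a Stolz angle $A$ at $1$ and an $\epsilon>0$ with $\bigtriangledown_\delta\cap\{|z|<\epsilon\}\subseteq\Phi(A)$. Hence $z\to0$ within $\bigtriangledown_\delta$ forces $w=\Phi^{-1}(z)\to1$ non-tangentially, so $\Psi(G(z))=F(w)\to\Psi(\alpha)$. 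Finally, $\Psi$ extends to a homeomorphism of $\overline{\DD}$ onto $\overline{\C^-}\cup\{\infty\}$ whose inverse is continuous and finite at $\Psi(\alpha)$ (as $\alpha$ is finite, $\Psi(\alpha)\neq\Psi(\infty)=1$), so applying $\Psi^{-1}$ yields $\lim_{z\to0,\,z\in\bigtriangledown_\delta}G(z)=\alpha$, as required.

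The genuinely nontrivial input is the classical Lindel\"of theorem itself, which I invoke as a black box; the remaining work is the reduction. The only places demanding care are in the last step, where one must (i) verify that the cones $\bigtriangledown_\delta$ correspond under $\Phi$ to bona fide non-tangential approach regions at $1$, and (ii) treat uniformly the interior case $\alpha\in\C^-$ (where $\Psi(\alpha)\in\DD$) and the boundary case $\alpha\in\R$ (where $\Psi(\alpha)\in\partial\DD$)---which is precisely why the homeomorphism property of $\Psi^{-1}$ on the \emph{closed} disc, rather than mere continuity on the open disc, is needed. I expect the angular-region bookkeeping in (i) to be the main, though entirely elementary, obstacle.
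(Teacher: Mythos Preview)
Your proposal is correct and follows essentially the same strategy as the paper: compose $G$ with M\"obius transformations on both the domain and the range to reduce to a bounded analytic function on a standard domain, then invoke the classical Lindel\"of theorem as a black box. The paper works on the right half-plane (applying \cite[Theorem~2.2]{CoLo} to $f(z)=\frac{\ri G(\ri z)-1}{\ri G(\ri z)+1}$) rather than on the disc, but this is an immaterial difference; your treatment is in fact more explicit than the paper's about the correspondence between Stolz angles at the boundary point and the cones $\bigtriangledown_\delta$, which the paper leaves entirely to the reader.
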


Lemma~\ref{Lindeloef} may e.g.\ be derived from
Theorem~2.2 in \cite{CoLo}, which provides a similar result
for (in particular) \emph{bounded} analytic functions 
$f\colon\{x+\ri y\mid x>0, \ y\in\R\}\to\C$. Recalling that the
mapping $\zeta\mapsto\frac{\zeta-1}{\zeta+1}$ is a conformal bijection
of $\{x+\ri y\mid x>0, \ y\in\R\}$ onto the open unit disc in $\C$,
Lemma~2.1 then follows by applying \cite[Theorem~2.2]{CoLo} to
the bounded function
\[
f(z)=\frac{\ri G(\e^{\ri\frac{\pi}{2}}z)-1}{\ri
  G(\e^{\ri\frac{\pi}{2}}z)+1}, \qquad
(z\in\{x+\ri y\mid x>0, \ y\in\R\}).
\]

\section{The case of L\'evy measures with positive density on
  $\R$}\label{compact_case}

In this section we prove unimodality for measures in $\CL(\boxplus)$
with L\'evy measures in the form $\frac{k(t)}{|t|}$, where $k$ 
satisfies the conditions (a)-(c) listed below. In a previous version
of the manuscript we considered the case where $k$ is
compactly supported, but in that setting some proofs become more
delicate and complicated than the ones to follow.

Throughout the remaining part of this section we consider a function
$k\colon\R\setminus\{0\}\to[0,\infty)$ such that  
\begin{enumerate}[\rm a]
\item $k$ is $C^2$ and $(1+t^2)^m k^{(n)}(t)$ are bounded for $m, n\in\{0,1,2\}$,  
\item $k$ is increasing on $(-\infty,0)$, decreasing on $(0,\infty)$, 
\item $k$ is strictly positive on $\R\setminus\{0\}$. 
\end{enumerate}


Next we define
\begin{equation*}
\begin{split}
\tilde{k}(t)&={\rm sign}(t)k(t), \qquad(t\in\R),
\\[.2cm] 
\tilde{G}_k(z)&=\int_{\R}\frac{\tilde{k}(t)}{z-t}\6t,
\qquad(z\in\C^+),
\\[.2cm]
H_k(z)&=z+z\tilde{G}_k(z), \qquad(z\in\C^+).
\end{split}
\end{equation*}
We note for later use that
\begin{equation}
H_k(z)=z+z\int_{\R}\frac{\tilde{k}(t)}{z-t}\6t
=z+\int_\R\Big(1+\frac{t}{z-t}\Big)\tilde{k}(t)\6t
=z+\gamma_k+\int_\R\frac{|t|k(t)}{z-t}\6t,
\label{eq5}
\end{equation}
where we have introduced $\gamma_k=\int_\R\tilde{k}(t)\6t$.

In the following we shall consider additionally the auxiliary function
$F_k\colon\C^+\to(0,\infty)$ given by
\begin{equation}
F_k(x+\ri y)=\int_\R\frac{|t|k(t)}{(x-t)^2+y^2}\6t, \qquad(x+\ri y\in\C^+), 
\label{eq2}
\end{equation}
which satisfies $F_k(z) \im(z) = \im(z- H_k(z))$. 

\begin{lemma}\label{intro_v}
\begin{enumerate}

\item[\rm (i)]
For all $x$ in $\R$ there exists a unique number $y=v_k(x)$ in
$(0,\infty)$ such that
\begin{equation}
F_k(x+\ri v_k(x))=\int_\R\frac{|t|k(t)}{(x-t)^2+v_k(x)^2}\6t=1.
\label{eq3}
\end{equation}

\item[\rm (ii)] We have that
\[
\CG:=\{z\in\C^+\mid H_k(z)\in\R\}=\{x+\ri v_k(x)\mid x\in\R\}.
\]

\item[\rm (iii)]
We have that
\[
\CG^+:=\{z\in\C^+\mid H_k(z)\in\C^+\}=\{x+\ri y\mid x\in\R, \ y>v_k(x)\}.
\]

\item[\rm (iv)] The function $v_k\colon\R\to(0,\infty)$ is analytic on $\R$.
  
  \item[\rm(v)] We have that 
  $$
  \lim_{|x|\to\infty}v_k(x)=0. 
  $$


\end{enumerate}
\end{lemma}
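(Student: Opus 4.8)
The plan is to prove the five assertions in the order (i), (ii), (iii), (iv), (v), since each builds naturally on its predecessor. For part (i), fix $x\in\R$ and study the function $y\mapsto F_k(x+\ri y)$ on $(0,\infty)$. From \eqref{eq2} the integrand $\frac{|t|k(t)}{(x-t)^2+y^2}$ is, for each fixed $t$, strictly decreasing and continuous in $y$, so by dominated convergence $y\mapsto F_k(x+\ri y)$ is continuous and strictly decreasing. The remaining point is to check the limiting values: as $y\to0^+$ monotone convergence gives $F_k(x+\ri y)\to\int_\R\frac{|t|k(t)}{(x-t)^2}\6t$, which I would argue is $>1$ (indeed $=+\infty$ unless integrability fails, but in any case exceeds $1$ because near $t=x$ the density $|t|k(t)$ is strictly positive by (c) while $(x-t)^{-2}$ blows up), whereas as $y\to\infty$ dominated convergence gives $F_k(x+\ri y)\to0$. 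By the intermediate value theorem there is a unique $y=v_k(x)>0$ solving \eqref{eq3}. One must handle the edge case $x=0$ separately, where $|t|k(t)/(x-t)^2=k(t)/|t|$ is integrable only away from $0$; the strict positivity and the blow-up of the kernel still force the $y\to0^+$ limit above $1$.

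For (ii) and (iii), the key identity is the one recorded right after \eqref{eq2}, namely $F_k(z)\,\im(z)=\im\bigl(z-H_k(z)\bigr)$, equivalently $\im H_k(z)=\im(z)\bigl(1-F_k(z)\bigr)$. Thus for $z=x+\ri y$ with $y>0$ the sign of $\im H_k(z)$ is exactly the sign of $1-F_k(x+\ri y)$. Since $y\mapsto F_k(x+\ri y)$ strictly decreases through the value $1$ at $y=v_k(x)$, we get $\im H_k(z)=0$ iff $y=v_k(x)$, $\im H_k(z)>0$ iff $y>v_k(x)$, and $\im H_k(z)<0$ iff $y<v_k(x)$. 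This immediately yields the descriptions of $\CG$ and $\CG^+$ claimed in (ii) and (iii).

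For (iv), analyticity of $v_k$, I would appeal to the implicit function theorem (real-analytic version). Consider the real-analytic function $\Phi(x,y)=F_k(x+\ri y)-1$ on $\R\times(0,\infty)$; real-analyticity follows from differentiating under the integral sign, which is justified by the boundedness hypotheses in (a) on $k$ and its derivatives together with the decay of the kernel. Then $\Phi(x,v_k(x))=0$, and the partial derivative $\partial_y\Phi(x,y)=\partial_y F_k(x+\ri y)=\int_\R\frac{-2y\,|t|k(t)}{\bigl((x-t)^2+y^2\bigr)^2}\6t$ is strictly negative at $y=v_k(x)>0$ (again using strict positivity (c)), so the IFT gives a real-analytic local solution, which by uniqueness from (i) must coincide with $v_k$.

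Finally, for (v) I would estimate $F_k$ for large $|x|$. Fix the normalization $F_k(x+\ri v_k(x))=1$ and suppose toward a contradiction that $v_k(x)$ does not tend to $0$ along some sequence $|x_n|\to\infty$; passing to a subsequence, either $v_k(x_n)\to c>0$ (possibly $c=\infty$). If $\liminf v_k(x_n)=c>0$ I would bound $1=F_k(x_n+\ri v_k(x_n))\le\int_\R\frac{|t|k(t)}{(x_n-t)^2+v_k(x_n)^2}\6t$ and show the right side tends to $0$: the mass of $|t|k(t)$ is finite (since $\int|t|k(t)\6t=\gamma_k<\infty$ by the integrability built into (a)), and for large $|x_n|$ the kernel $\bigl((x_n-t)^2+c^2\bigr)^{-1}$ is uniformly small on any fixed compact $t$-set while the tails contribute little, giving $F_k(x_n+\ri v_k(x_n))\to0$, contradicting the value $1$. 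Hence $v_k(x)\to0$. The main obstacle here is the case where $v_k(x)$ stays bounded away from $0$ only after the singular contribution near $t=x$ has been accounted for, so the delicate point is splitting the integral into a neighborhood of $t=x$ (where $|t|k(t)$ is controlled but the kernel is largest) and its complement, and showing neither piece can sustain the value $1$ as $|x|\to\infty$.
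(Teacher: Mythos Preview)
Your proof follows essentially the same route as the paper's: monotonicity and limiting behavior of $y\mapsto F_k(x+\ri y)$ for (i), the identity $\im H_k(z)=\im(z)\bigl(1-F_k(z)\bigr)$ for (ii)--(iii), the implicit function theorem applied to $F_k-1$ for (iv), and dominated convergence for (v). Two small remarks: first, in (v) you write $\int_\R|t|k(t)\6t=\gamma_k$, but $\gamma_k=\int_\R\tilde{k}(t)\6t=\int_\R\sign(t)k(t)\6t$, not $\int_\R|t|k(t)\6t$; the finiteness of $\int_\R|t|k(t)\6t$ does hold, but it comes from the bound $(1+t^2)^2k(t)\le C$ in condition~(a). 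Second, your final worry about splitting the integral near $t=x$ is unnecessary: once $v_k(x_n)\ge c>0$, the integrand is dominated by $|t|k(t)/c^2\in L^1$ uniformly in $n$, so dominated convergence gives $F_k(x_n+\ri v_k(x_n))\to0$ directly, which is precisely the paper's argument (phrased there without contradiction: for fixed $y>0$, $F_k(x+\ri y)\to0$ as $|x|\to\infty$, and monotonicity in $y$ then forces $v_k(x)<y$ eventually).
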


\begin{proof}

(i) \ For any $x$ in $\R$ the function
\[
y\mapsto\int_\R\frac{|t|k(t)}{(x-t)^2+y^2}\6t, \qquad(y\in(0,\infty))
\]
takes values in $(0,\infty)$ and is continuous (by dominated
convergence) and strictly decreasing in $y$. Since $k$ is strictly
positive and continuous we find additionally that
\[
\lim_{y\searrow0}\int_\R\frac{|t|k(t)}{(x-t)^2+y^2}\6t
=\infty, \qand \lim_{y\nearrow\infty}\int_\R\frac{|t|k(t)}{(x-t)^2+y^2}\6t=0
\]
by monotone and dominated convergence. Hence there is a
unique $y=v_k(x)$ in $(0,\infty)$ such that 
$\int_\R\frac{|t|k(t)}{(x-t)^2+y^2}\6t=1$.

(ii) \ For any $x,y$ in $\R$, such that $y>0$, we note that
\begin{equation}
\begin{split}
\im\big(H_k(x+\ri y)\big)
&=y+\im\Big(\int_\R\frac{x+\ri y}{x+\ri y-t}\tilde{k}(t)\6t\Big)
=y+\int_\R\frac{y(x-t)-yx}{(x-t)^2+y^2}\tilde{k}(t)\6t
\\[.2cm]
&=y\Big(1-\int_\R\frac{t\tilde{k}(t)}{(x-t)^2+y^2}\6t\Big)
=y\Big(1-\int_\R\frac{|t|k(t)}{(x-t)^2+y^2}\6t\Big).
\end{split}
\label{eq0}
\end{equation}
Hence it follows that
\begin{equation}
\im\big(H_k(x+\ri y)\big)=0
\iff \int_\R\frac{|t|k(t)}{(x-t)^2+y^2}\6t=1.
\label{eq1}
\end{equation}
The right hand side of \eqref{eq1} holds, if and
only if $y=v_k(x)$. 

(iii) \ It is apparent that
$\int_\R\frac{|t|k(t)}{(x-t)^2+y^2}\6t<1$ for any $x$ in $\R$ and  
all $y$ in $(v_k(x),\infty)$.  In combination with \eqref{eq0} this shows that
$\CG^+=\{x+\ri y\mid x\in\R, \ y>v_k(x)\}$ as desired.

(iv) \ Consider the function
$\tilde{F}_k\colon\R\times(0,\infty)\to\R$ given by
\begin{equation*}
\tilde{F}_k(x,y)=F_k(x+\ri y)=\int_{\R}\frac{|t|k(t)}{(x-t)^2+y^2}
=1-y^{-1}\im\big(H_k(x+\ri y)), \quad((x,y)\in\R\times(0,\infty)).
\end{equation*}
Since $H_k$ is analytic on $\C^+$ it follows that $\tilde{F}_k$ is
analytic on $\R\times(0,\infty)$. By differentiation under the
integral sign we note in particular that
\[
\frac{\partial}{\partial y}\tilde{F}_k(x,y)
=-2y\int_\R\frac{|t|k(t)}{((x-t)^2+y^2)^2}\6t<0
\]
for all $(x,y)$ in $\R\times(0,\infty)$. Since $v_k(x)>0$ and
$\tilde{F}_k(x+\ri v_k(x))=1$ for all $x$ in $\R$ it follows then from
the Implicit Function Theorem (for analytic functions; see
\cite[Theorem~7.6]{FG}) that $v_k$ is analytic on $\R$.

(v) \ By dominated convergence $\lim_{|x|\to\infty} F_k(x+\ri y)=0$
for any fixed $y$ in $(0,\infty)$. Hence (v) follows from \eqref{eq3}
and the fact that $y \mapsto F_k(x+\ri y)$ is decreasing (for fixed $x$). 
\end{proof}

\begin{lemma}\label{formel_for_Cauchytransform}
Let $\nu_k$ be the measure in $\ID(\boxplus)$ with free characteristic
triplet $(0,\frac{k(t)}{|t|}\6t,\int_{-1}^1\tilde{k}(t)\6t)$.
Then the Cauchy transform $G_{\nu_k}$ of $\nu_k$ satisfies the identity:
\[
G_{\nu_k}(H_k(z))=\frac{1}{z}
\]
for all $z$ in $\C^+$ such that $H_k(z)\in\C^+$.
\end{lemma}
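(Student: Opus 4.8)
The plan is to compute the free cumulant transform $\CC_{\nu_k}$ explicitly from the free L\'evy--Khintchine representation \eqref{eqPL.2}, and then to read off the stated identity by inverting the defining relation \eqref{eqPL.6a}.

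First I would insert the free characteristic triplet $(0,\frac{k(t)}{|t|}\6t,\int_{-1}^1\tilde{k}(t)\6t)$ of $\nu_k$ into \eqref{eqPL.2}. Writing $\frac{k(t)}{|t|}=\frac{\tilde{k}(t)}{t}$ and using the elementary identity $\frac{1}{1-tz}-1=\frac{tz}{1-tz}$, the integrand $\big(\frac{1}{1-tz}-1-tz1_{[-1,1]}(t)\big)\frac{\tilde{k}(t)}{t}$ splits into a term $\frac{z\tilde{k}(t)}{1-tz}$ and a term $-z1_{[-1,1]}(t)\tilde{k}(t)$. The integral of the latter is exactly $-z\int_{-1}^1\tilde{k}(t)\6t$, which cancels against the linear term $\eta z=\big(\int_{-1}^1\tilde{k}(t)\6t\big)z$ coming from \eqref{eqPL.2} (and there is no quadratic term since $a=0$). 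This leaves
\[
\CC_{\nu_k}(z)=z\int_\R\frac{\tilde{k}(t)}{1-tz}\6t=\tilde{G}_k(\tfrac1z),
\]
the last step being the substitution $\frac{1}{1-tz}=\frac{1/z}{1/z-t}$. Throughout this computation one invokes the decay of $k$ from hypothesis (a) to guarantee absolute convergence of all the integrals.

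Next I would invert. By \eqref{eqPL.6a} we have $\CC_{\nu_k}(w)=wG_{\nu_k}^\brinv(w)-1$ on the region $R\subseteq\C^-$ where $G_{\nu_k}^\brinv$ is defined, so $G_{\nu_k}^\brinv(w)=\frac{1+\CC_{\nu_k}(w)}{w}$. Substituting $w=\frac1z$ (so that $z\in\Delta_{\eta,M}\subseteq\C^+$ corresponds to $w\in R$) and using the formula for $\CC_{\nu_k}$ just obtained gives
\[
G_{\nu_k}^\brinv(\tfrac1z)=z\big(1+\tilde{G}_k(z)\big)=z+z\tilde{G}_k(z)=H_k(z),
\]
by the definition $H_k(z)=z+z\tilde{G}_k(z)$ (cf.\ \eqref{eq5}). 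Applying $G_{\nu_k}$ to both sides yields $G_{\nu_k}(H_k(z))=\frac1z$ for all $z$ in $\Delta_{\eta,M}$.

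Finally I would upgrade this from the truncated cone $\Delta_{\eta,M}$ to the full set $\CG^+=\{x+\ri y\mid x\in\R,\ y>v_k(x)\}$ of Lemma~\ref{intro_v}(iii) by analytic continuation. Both $z\mapsto G_{\nu_k}(H_k(z))$ and $z\mapsto\frac1z$ are analytic on $\CG^+$ (on which $H_k(z)\in\C^+$, so that $G_{\nu_k}\circ H_k$ is defined), and $\CG^+$ is connected, being the region lying above the graph of the continuous function $v_k$. Since $v_k$ is bounded (it is continuous on $\R$ and tends to $0$ at $\pm\infty$ by Lemma~\ref{intro_v}(v)), for $M$ sufficiently large the cone $\Delta_{\eta,M}$ is contained in $\CG^+$; the two analytic functions agree on this nonempty open subset and hence coincide on all of $\CG^+$ by the identity theorem. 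I expect the genuine work to lie not in the algebra of the first step but in this domain bookkeeping: matching the abstract region $R$ on which \eqref{eqPL.6a} is valid with the explicitly described region $\CG^+$, and justifying that $\Delta_{\eta,M}\subseteq\CG^+$ for large $M$ so that the continuation argument has somewhere to start.
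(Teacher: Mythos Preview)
Your proposal is correct and follows essentially the same argument as the paper: compute $\CC_{\nu_k}$ from the free L\'evy--Khintchine formula, cancel the $\eta z$ term against the $1_{[-1,1]}$ contribution to obtain $\CC_{\nu_k}(1/z)=\tilde{G}_k(z)$, read off $G_{\nu_k}^\brinv(1/z)=H_k(z)$ from \eqref{eqPL.6a}, and then extend from $\Delta_{\eta,M}$ to all of $\CG^+$ by analytic continuation using the connectedness established in Lemma~\ref{intro_v}(iii). Your version is in fact slightly more explicit than the paper's in justifying the inclusion $\Delta_{\eta,M}\subseteq\CG^+$ via the boundedness of $v_k$.
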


\begin{proof} 

Let $\CC_{\nu_k}$ denote the free cumulant transform of $\nu_k$
(extended to all of $\C^-$).
For any $w$ in $\C^-$ we then find (cf.\ formula \eqref{eqPL.2}) that 
\begin{equation*}
\begin{split}
\CC_{\nu_k}(w)&=w\int_{-1}^1\tilde{k}(t)\6t
+\int_\R\Big(\frac{1}{1-wt}-1-wt1_{[-1,1]}(t)\Big)\frac{k(t)}{|t|}\6t 
\\[.2cm]
&=\int_\R\Big(\frac{1}{1-wt}-1\Big)\frac{k(t)}{|t|}\6t
\\[.2cm]
&=w\int_\R\frac{t}{1-wt}\frac{k(t)}{|t|}\6t
=w\int_\R\frac{\tilde{k}(t)}{1-wt}\6t.
\end{split}
\end{equation*}
Setting $w=\frac{1}{z}$ it follows for any $z$ in $\C^+$ that
\[
\CC_{\nu_k}\big(\tfrac{1}{z}\big)=\frac{1}{z}
\int_\R\frac{\tilde{k}(t)}{1-\frac{t}{z}}\6t
=\int_\R\frac{\tilde{k}(t)}{z-t}\6t
=\tilde{G}_k(z).
\]
By definition of the free cumulant transform it therefore follows that
\[
\frac{1}{z}G_{\nu_k}^\brinv(\tfrac{1}{z})-1=\CC_{\nu_k}(\tfrac{1}{z})
=\tilde{G}_k(z), 
\]
and hence that
\[
G_{\nu_k}^\brinv(\tfrac{1}{z})=z\tilde{G}_k(z)+z=H_k(z)
\]
for all $z$ in a suitable region $\Delta_{\eta,M}$, 
where $\eta,M>0$. We may thus conclude that
\begin{equation}
\frac{1}{z}=G_{\nu_k}(H_k(z))
\label{eq1f}
\end{equation}
for all $z$ in $\Delta_{\eta,M}$, but since $\{z\in\C^+\mid
H_k(z)\in\C^+\}$ is a connected region of $\C^+$ (cf.\
Lemma~\ref{intro_v}(iii)), the identity \eqref{eq1f} extends to all
$z$ in this region by analytic continuation.
\end{proof}

In the following we consider the function $P_k\colon\R\to\R$ defined by
\begin{equation}
P_k(x)=H_k(x+\ri v_k(x)), \qquad(x\in\R).
\label{defP}
\end{equation}

\begin{proposition}\label{non-tangential_limit}
For any $x$ in $\R$ we have that
\[
G_{\nu_k}(z)\longrightarrow \frac{1}{x+\ri v_k(x)} \qquad\text{as $z\to
  P_k(x)$ non-tangentially from $\C^+$.}
\]
\end{proposition}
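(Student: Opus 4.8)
The plan is to translate the boundary point $P_k(x)$ to the origin and then apply Lindel\"of's theorem (Lemma~\ref{Lindeloef}), with the approach curve furnished by the functional relation of Lemma~\ref{formel_for_Cauchytransform}. Fix $x$ in $\R$ and put $p=P_k(x)=H_k(x+\ri v_k(x))$; by Lemma~\ref{intro_v}(ii) the point $x+\ri v_k(x)$ lies in $\CG$, so $p$ is real. Consider the shifted function $G(w)=G_{\nu_k}(w+p)$ for $w$ in $\C^+$. Since $p$ is real, $w+p$ lies in $\C^+$ whenever $w$ does, so $G$ is well defined; moreover $G$ maps $\C^+$ into $\C^-$ because $G_{\nu_k}$ is the Cauchy transform of a probability measure (from \eqref{eqPL.6} one reads off directly that $\im G_{\nu_k}<0$ on $\C^+$). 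Thus $G$ satisfies the standing hypotheses of Lemma~\ref{Lindeloef}.

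Next I would produce the approach curve. For $t$ in $[0,1)$ set $z_t=x+\ri\big(v_k(x)+(1-t)\big)$. Since $\im(z_t)=v_k(x)+(1-t)>v_k(x)$, Lemma~\ref{intro_v}(iii) gives $z_t\in\CG^+$, i.e.\ $H_k(z_t)\in\C^+$, and hence Lemma~\ref{formel_for_Cauchytransform} applies to yield
\[
G_{\nu_k}(H_k(z_t))=\frac{1}{z_t}, \qquad (t\in[0,1)).
\]
Now define $w_t=H_k(z_t)-p$. Because $H_k(z_t)\in\C^+$ and $p$ is real, each $w_t$ lies in $\C^+$, and $G(w_t)=G_{\nu_k}(H_k(z_t))=\frac{1}{z_t}$.

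It remains to identify the limits as $t\to1$. As $t\to1$ we have $z_t\to x+\ri v_k(x)$, an interior point of $\C^+$ (here $v_k(x)>0$ is used); since $H_k$ is analytic, hence continuous, on $\C^+$, this gives $H_k(z_t)\to H_k(x+\ri v_k(x))=p$, so that $w_t\to0$. Simultaneously $\frac{1}{z_t}\to\frac{1}{x+\ri v_k(x)}$, whence $\lim_{t\to1}G(w_t)=\frac{1}{x+\ri v_k(x)}$. Applying Lemma~\ref{Lindeloef} to $G$ with the curve $(w_t)_{t\in[0,1)}$ shows that $G$ has non-tangential limit $\frac{1}{x+\ri v_k(x)}$ at $0$; that is, $\lim_{w\to0,\,w\in\bigtriangledown_\delta}G(w)=\frac{1}{x+\ri v_k(x)}$ for every $\delta$ in $(0,\pi)$. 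Unshifting by $p$ (replacing $w$ by $z-p$, so that $w\in\bigtriangledown_\delta$ corresponds to $z\in p+\bigtriangledown_\delta$) yields precisely the assertion that $G_{\nu_k}(z)\to\frac{1}{x+\ri v_k(x)}$ as $z\to p=P_k(x)$ non-tangentially from $\C^+$.

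I expect no deep analytic difficulty here: the real content is the choice of approach curve, which the functional relation of Lemma~\ref{formel_for_Cauchytransform} hands us. The one point demanding care is the verification of the hypotheses of Lindel\"of's theorem, namely that the \emph{image} curve $(w_t)$ genuinely lies in $\C^+$ and tends to $0$. The former rests on keeping $z_t$ inside $\CG^+$ (Lemma~\ref{intro_v}(iii)), while the latter uses the continuity of $H_k$ at the interior point $x+\ri v_k(x)$, which is legitimate precisely because $v_k(x)>0$.
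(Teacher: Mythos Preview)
Your proof is correct and follows essentially the same approach as the paper: construct the vertical approach curve $x+\ri(v_k(x)+s)$ inside $\CG^+$, push it forward by $H_k$ to obtain a curve in $\C^+$ landing at $P_k(x)$, use Lemma~\ref{formel_for_Cauchytransform} to identify the limit of $G_{\nu_k}$ along this curve, and invoke Lindel\"of. The only (cosmetic) difference is that you make the translation by $p=P_k(x)$ explicit so as to match the statement of Lemma~\ref{Lindeloef} at the origin, whereas the paper applies the lemma directly at $P_k(x)$; your version is thus slightly more careful on this point.
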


\begin{proof}  For any $s$ 
in $[0,1]$ we put $w_s=x+\ri(v_k(x)+s)$, so that $w_s\in\CG^+$ for all
$s$ in $(0,1]$ according to Lemma~\ref{intro_v}(iii). Moreover, since
$H_k$ is analytic on $\C^+$, and 
$w_s\in\C^+$ for all $s$ in $[0,1]$, it follows that
\[
H_k(w_s)\longrightarrow H_k(w_0)=H_k(x+\ri v_k(x))=P_k(x)\in\R \qquad\text{as
  $s\searrow0$}.
\]
In addition it follows from Lemma~\ref{formel_for_Cauchytransform}
that
\[
G_{\nu_k}(H_k(w_s))=\frac{1}{w_s}=\frac{1}{x+\ri(v_k(x)+s)}
\longrightarrow\frac{1}{x+\ri v_k(x)} \qquad\text{as $s\searrow0$}. 
\]
Thus $G_{\nu_k}(z)$ has the limit $\frac{1}{x+\ri v_k(x)}$ as $z\to P_k(x)$
along the curve $s\mapsto H_k(w_s)$. It follows then from
Lemma~\ref{Lindeloef} that in
fact $G_{\nu_k}(z)\to\frac{1}{x+\ri v_k(x)}$ as $z\to P_k(x)$ non-tangentially
from $\C^+$, as desired.
\end{proof}

\begin{lemma}\label{P_is_homeomorphism}
The function $P_k$ is a strictly increasing homeomorphism of $\R$ onto $\R$.
\end{lemma}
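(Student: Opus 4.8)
The plan is to establish the claimed properties of $P_k$ separately, using the complex-analytic structure already developed. Recall from \eqref{defP} that $P_k(x) = H_k(x + \ri v_k(x))$, and by Lemma~\ref{intro_v}(ii) this is exactly $H_k$ restricted to the curve $\CG$, on which $H_k$ takes real values. First I would verify continuity: since $v_k$ is analytic on $\R$ by Lemma~\ref{intro_v}(iv) and $H_k$ is analytic on $\C^+$, the composition $x \mapsto H_k(x + \ri v_k(x))$ is certainly continuous (indeed real-analytic). Next I would treat surjectivity onto $\R$. The natural route is to read off the behaviour of $P_k$ at $\pm\infty$ from the explicit formula \eqref{eq5}. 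Along the curve $z = x + \ri v_k(x)$, by Lemma~\ref{intro_v}(v) we have $v_k(x) \to 0$ as $|x| \to \infty$, so $z \to \pm\infty$ in a controlled way; the dominant term $z$ in \eqref{eq5} forces $\re(P_k(x)) \to \pm\infty$, while the integral correction stays bounded (using the boundedness hypotheses (a) on $k$). Combined with continuity and the intermediate value theorem, this gives that $P_k$ maps $\R$ onto $\R$.

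The heart of the matter, and the step I expect to be the main obstacle, is strict monotonicity. Since $P_k$ is continuous and surjective, injectivity is equivalent to strict monotonicity, so it suffices to show $P_k'(x) \neq 0$ (and then determine the sign). The plan is to differentiate. Writing $w(x) = x + \ri v_k(x)$, the chain rule gives $P_k'(x) = H_k'(w(x)) \cdot (1 + \ri v_k'(x))$. Because $P_k$ is real-valued, I would extract information by combining the real and imaginary parts of this identity. One clean way to package this: since $H_k(w(x)) \in \R$ along the curve, differentiating $\im(H_k(w(x))) = 0$ recovers a relation, while $P_k'(x) = \frac{\d}{\d x}\re(H_k(w(x)))$. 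I would compute $H_k'$ from \eqref{eq5}, namely
\[
H_k'(z) = 1 - \int_\R \frac{|t| k(t)}{(z - t)^2}\,\d t,
\]
and evaluate real and imaginary parts of $H_k'(w(x))$ at $w(x) = x + \ri v_k(x)$, using the defining normalization \eqref{eq3} that $\int_\R \frac{|t|k(t)}{(x-t)^2 + v_k(x)^2}\,\d t = 1$.

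The key algebraic identity to exploit is that \eqref{eq3} says precisely that the \emph{modulus-type} integral equals $1$. Separating $H_k'(w(x)) = 1 - \int_\R \frac{|t|k(t)\,((x-t)^2 - v_k(x)^2 - 2\ri v_k(x)(x-t))}{((x-t)^2 + v_k(x)^2)^2}\,\d t$ into real and imaginary parts, and combining with the expression for $v_k'(x)$ obtained by implicit differentiation of \eqref{eq3} (equivalently $\tilde F_k(x, v_k(x)) = 1$), I expect the factors to conspire so that $P_k'(x)$ reduces to a manifestly positive quantity. Concretely, the imaginary part of $H_k'(w(x))$ will match (up to the factor $1 + \ri v_k'(x)$) the cross terms produced by $v_k'(x)$, forcing $\im(P_k'(x)) = 0$ automatically and leaving $\re(P_k'(x))$ expressible as a ratio of two positive integrals. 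The delicate point is controlling the sign after these cancellations; I anticipate the final expression takes the form of $\big(\int \frac{|t|k(t)}{((x-t)^2 + v_k(x)^2)^2}\,\d t\big)$-type integrals with positive coefficients, which are strictly positive since $k > 0$ by hypothesis (c). Once $P_k' > 0$ everywhere is established, strict monotonicity follows, and together with continuity, surjectivity, and the open mapping / invariance of domain for the resulting continuous bijection $\R \to \R$, we conclude that $P_k$ is a strictly increasing homeomorphism of $\R$ onto $\R$.
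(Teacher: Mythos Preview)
Your injectivity argument via $P_k'(x)>0$ is a genuinely different route from the paper's, and it works. Writing $H_k=u+\ri v$ and using Cauchy--Riemann together with the constraint $v(x,v_k(x))=0$, one obtains $P_k'(x)=|H_k'(w(x))|^2/v_y(w(x))$ with $w(x)=x+\ri v_k(x)$; from \eqref{eq0} one computes $v_y(w(x))=2v_k(x)^2\int_\R |t|k(t)\,((x-t)^2+v_k(x)^2)^{-2}\,\d t>0$, and since $\re H_k'(w(x))=u_x=v_y>0$ this also gives $H_k'(w(x))\ne0$, so $P_k'>0$ as you predicted. The paper instead proves injectivity softly: if $P_k(x)=P_k(x')$, then by Proposition~\ref{non-tangential_limit} (which rests on Lindel\"of's theorem, Lemma~\ref{Lindeloef}) the non-tangential limit of $G_{\nu_k}$ at that common value equals both $1/(x+\ri v_k(x))$ and $1/(x'+\ri v_k(x'))$, forcing $x=x'$. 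Your computation is more elementary and self-contained; the paper's route simply reuses Proposition~\ref{non-tangential_limit}, which is needed anyway for Corollary~\ref{density}.

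Your surjectivity sketch, however, glosses over the only genuine difficulty. Along $z=x+\ri v_k(x)$ one has $v_k(x)\to0$ by Lemma~\ref{intro_v}(v), so the integral $\int_\R |t|k(t)/(z-t)\,\d t$ in \eqref{eq5} develops a near-singularity at $t=x$; the claim that it ``stays bounded (using the boundedness hypotheses (a) on $k$)'' is not automatic, because (a) gives decay and smoothness of $k$ but says nothing directly about the blow-up of $1/(z-t)$. One has to exploit the $C^1$ regularity of $t\mapsto|t|k(t)$ near $t=x$---e.g.\ by integration by parts, or by a mean-value expansion on $[x-\delta,x+\delta]$ together with the odd symmetry of $(x-t)/((x-t)^2+y^2)$ to kill the leading term---in order to control this piece uniformly in small $y$. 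The paper devotes the bulk of its proof of Lemma~\ref{P_is_homeomorphism} to precisely this estimate (formula~\eqref{eqB} and the splitting \eqref{eqA}); you should either supply a comparable argument or at least flag that this is where the work lies.
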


\begin{proof} We show first that $P_k(x)\to\pm\infty$ as
  $x\to\pm\infty$. From Lemma~\ref{intro_v}(v), formula~\eqref{defP}
  and  formula~\eqref{eq5} this will follow, if we show that 
\begin{equation}\label{eqB}
\sup_{y \in (0,1/2)}\Big|\int_\R\frac{|t|k(t)}{x+\ri y-t}\6t\Big|\longrightarrow0
\text{~as~} |x|\to \infty.
\end{equation}
Consider in the following $x$ in $\R\setminus[-2,2]$ and $y,\delta$ in
$(0,\frac{1}{2})$.
We then divide the integral as follows:
\begin{align}\label{eqA}
\int_\R\frac{|t|k(t)}{x+\ri y-t}\6t 
&= \int_{x-\delta}^{x+\delta}\frac{|t|k(t)}{x+\ri y-t}\6t +
\int_{\R\setminus[x-\delta,x+\delta]}\frac{|t|k(t)}{x+\ri y-t}\6t.
\end{align}
To estimate the first term on the right hand side of \eqref{eqA}, we
perform integration by parts:  
 \begin{align*}
 \int_{x-\delta}^{x+\delta}\frac{|t|k(t)}{x+\ri y-t}\6t
 &= \Big[-\log(x-t+\ri y)|t|k(t)\Big]_{x-\delta}^{x+\delta} 
+\int_{x-\delta}^{x+\delta}\log(x-t+\ri y)\frac{\d}{\d t}\big(|t|k(t)\big)\6t,  
\end{align*}
where $\log$ is the principal branch, i.e.,
\[
\log(x-t+\ri y)=\tfrac{1}{2}\log((x-t)^2+y^2)+\ri{\rm Arg}(x-t+\ri y),
\]
where ${\rm Arg}$ is the principal argument. Given any positive number
$\epsilon$, we choose next $\delta$ in $(0,1/2)$ such that
\[
\int_{-\delta}^{\delta}
\sqrt{\pi^2+(\log|t|)^2}\6t \leq \epsilon\Big(\sup_{|t| \geq
  1}\Big|\frac{\d}{\d t}\big(|t| k(t)\big)\Big|\Big)^{-1}. 
\]
Since $t\mapsto|\log(t)|$ is decreasing on $(0,1)$, it follows then that
\begin{align*}
 \Big|\int_{x-\delta}^{x+\delta}\log(x-t+\ri
   y)\frac{\d}{\d t}\big(|t|k(t)\big)\6t\Big| 
 &\leq\int_{x-\delta}^{x+\delta}\sqrt{\pi^2+(\log|x-t|)^2}
 \Big|\frac{\6}{\6t}\big(|t|k(t)\big)\Big|\6t \\ 
 &\leq \sup_{|t| \geq 1} \Big|\frac{\d}{\d t}\big(|t| k(t)\big)\Big|
 \int_{-\delta}^{\delta}\sqrt{\pi^2+(\log|t|)^2}\6t \leq \epsilon.
\end{align*}
Since $|t|k(t)\to0$ as
$|t|\to\infty$ (cf.\ condition (a) above), we note further that
$$
\Big|\Big[-\log(x-t+\ri y)|t|k(t)\Big]_{x-\delta}^{x+\delta}\Big| 
\leq 2\cdot \sqrt{\pi^2+(\log \delta)^2} \cdot
\max\big\{|x-\delta|k(x-\delta),|x+\delta|k(x+\delta)\big\}
\leq \epsilon,
$$
for any $y$ in $(0,1/2)$ and all $x$ with $|x|$ sufficiently large. Thus the
first term of \eqref{eqA} is 
bounded by $2\epsilon$ whenever $|x|$ is large enough, uniformly in $y
\in(0,1/2)$.

Regarding the second term on the right hand side of \eqref{eqA} we
note first that 
$\lim_{|x|\to\infty}\int_{\R\setminus[x-\delta,x+\delta]}\frac{|t|k(t)}{|x-t|}\6t=0$
by dominated convergence. Therefore
\begin{align*} 
\sup_{y\in\R}
\Big|\int_{\R\setminus[x-\delta,x+\delta]}\frac{|t|k(t)}{x+\ri y-t}\6t\Big| 
\leq\int_{\R\setminus[x-\delta,x+\delta]}\frac{|t|k(t)}{|x-t|}\6t 
\leq \epsilon,
\end{align*}
whenever $|x|$ is sufficiently large. Thus we have established
\eqref{eqB}. 

It remains now to show that $P_k$ is injective and continuous on
$\R$, since these properties are then automatically transferred to the
inverse $P_k^\brinv$. The continuity is obvious from the continuity of
$v_k$ (cf.\ formula~\ref{defP}).
To see that $P_k$ is injective on $\R$, assume that
$x,x'\in\R$ such that $P_k(x)=P_k(x')$. Then
Proposition~\ref{non-tangential_limit} shows that 
\[
\frac{1}{x+\ri v_k(x)}=\lim_{z\overset{\sphericalangle}{\to}P_k(x)}G_{\nu_k}(z)
=\lim_{z\overset{\sphericalangle}{\to}P_k(x')}G_{\nu_k}(z)=\frac{1}{x'+\ri v_k(x')},
\]
where ``$\overset{\sphericalangle}{\to}$'' denotes non-tangential
limits. Clearly the above identities imply that $x=x'$.
\end{proof}

\begin{corollary}\label{density}
The measure $\nu_k$ is absolutely continuous with respect to Lebesgue
measure with a continuous density $f_{\nu_k}$ given by
\[
f_{\nu_k}(P_k(x))=\frac{v_k(x)}{\pi(x^2+v_k(x)^2)}, \qquad(x\in\R).
\]
\end{corollary}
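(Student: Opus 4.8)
The plan is to recover the density of $\nu_k$ from the boundary behaviour of its Cauchy transform. The basic tool is the identity
\[
-\tfrac1\pi\im G_{\nu_k}(u+\ri y)=\frac1\pi\int_\R\frac{y}{(u-t)^2+y^2}\,\nu_k(\d t)=(P_y*\nu_k)(u),\qquad(u\in\R,\ y>0),
\]
expressing $-\tfrac1\pi\im G_{\nu_k}$ as the Poisson integral of $\nu_k$; recovering $\nu_k$ from it is the content of the Stieltjes inversion formula. Everything thus reduces to understanding the vertical boundary limit of $(P_y*\nu_k)(u)$ as $y\searrow0$ for fixed $u$.

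First I would combine Lemma~\ref{P_is_homeomorphism} with Proposition~\ref{non-tangential_limit}. Since $P_k$ is a homeomorphism of $\R$ onto $\R$, every $u\in\R$ may be written uniquely as $u=P_k(x)$; and since the vertical approach $u+\ri y\to u$ is a non-tangential approach, Proposition~\ref{non-tangential_limit} gives $\lim_{y\searrow0}G_{\nu_k}(u+\ri y)=\frac{1}{x+\ri v_k(x)}$. Taking imaginary parts,
\[
\lim_{y\searrow0}(P_y*\nu_k)(u)=-\tfrac1\pi\im\frac{1}{x+\ri v_k(x)}=\frac{v_k(x)}{\pi(x^2+v_k(x)^2)}=:g(u).
\]
This limit is finite for \emph{every} $u\in\R$, and $g$ is continuous, being the composition of $P_k^\brinv$ (continuous by Lemma~\ref{P_is_homeomorphism}) with the continuous map $x\mapsto\frac{v_k(x)}{\pi(x^2+v_k(x)^2)}$ (recall $v_k$ is analytic by Lemma~\ref{intro_v}(iv)). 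This $g$ is the candidate density.

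It remains to upgrade this pointwise boundary convergence to absolute continuity with density $g$, and this is where the main work lies. Writing the Lebesgue decomposition $\nu_k=f\,\d u+\nu_k^{\rm s}$, I would appeal to the classical Fatou theory of Poisson integrals: the vertical limit $\lim_{y\searrow0}(P_y*\nu_k)(u)$ equals $f(u)$ for Lebesgue-a.e.\ $u$, whereas at $\nu_k^{\rm s}$-a.e.\ $u$ it equals $+\infty$. The latter follows from the elementary lower bound $(P_y*\nu_k)(u)\geq\frac1{2\pi y}\nu_k^{\rm s}((u-y,u+y))$ together with the fact that a singular measure has infinite symmetric derivative at almost all of its points. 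Since we have shown the limit to be the finite number $g(u)$ at \emph{every} point $u\in\R$, there is no point at which it is $+\infty$; hence $\nu_k^{\rm s}=0$. Then $f=g$ Lebesgue-a.e., and by continuity of $g$ we obtain $\nu_k(\d u)=g(u)\,\d u$, which is exactly the assertion $f_{\nu_k}(P_k(x))=\frac{v_k(x)}{\pi(x^2+v_k(x)^2)}$.

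The crux is precisely the exclusion of a singular part: pointwise Poisson convergence gives only $\int_a^b g\,\d u\le\nu_k((a,b))$ via Fatou's lemma, so one genuinely needs the finite-versus-infinite dichotomy above, and it is essential that the finite limit holds at every point (guaranteed by surjectivity of $P_k$) rather than merely almost everywhere. Should one prefer to stay within the integrated Stieltjes inversion formula, the same conclusion follows by dominated convergence once one establishes a uniform bound $\sup_{u\in[a,b],\,y\in(0,1)}|\im G_{\nu_k}(u+\ri y)|<\infty$; such a bound can be extracted from the relation $G_{\nu_k}=1/H_k^\brinv$ on $\C^+$—legitimate since $G_{\nu_k}\circ H_k(z)=1/z$ forces $H_k$ to be injective on $\CG^+$—combined with the positive lower bound satisfied by $v_k$ on compact sets.
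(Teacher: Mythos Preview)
Your proof is correct and follows essentially the same route as the paper: use Lemma~\ref{P_is_homeomorphism} and Proposition~\ref{non-tangential_limit} to obtain a finite vertical limit of $G_{\nu_k}$ at \emph{every} real point, invoke the fact that the singular part is supported where this limit is infinite to conclude $\nu_k$ has no singular part, and then read off the density via Stieltjes inversion and check its continuity through $P_k^\brinv$ and $v_k$. The only difference is that you spell out the Fatou/Poisson-integral argument for the singular-part exclusion, whereas the paper simply cites the standard fact (from \cite{RS}) that the singular part is concentrated on $\{\xi:\lim_{y\searrow0}|G_{\nu_k}(\xi+\ri y)|=\infty\}$.
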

In particular, the support of $\nu_k$ is $\R$. 
\begin{proof}
This follows by Stieltjes-Inversion and
Proposition~\ref{non-tangential_limit}. Indeed, for any $x$ in $\R$ we
have that
\[
\lim_{y\searrow0}G_{\nu_k}(P_k(x)+\ri y)=\frac{1}{x+\ri v_k(x)}.
\]
Recalling (see e.g.\ Chapter~XIII in \cite{RS}) that the singular part
of $\nu_k$ is concentrated on the set 
\[
\big\{\xi\in\R\bigm| 
\textstyle{\lim_{y\searrow0}|G_{\nu_k}(\xi+\ri y)|=\infty}\big\},
\]
it follows in particular that
$\nu_k$ has no singular part. For any $x$ in $\R$ we 
find furthermore by the Stieltjes Inversion Formula that
\[
f_{\nu_k}(P_k(x))=\frac{-1}{\pi}\lim_{y\searrow0}\im(G_{\nu_k}(P_k(x)+\ri y))
=\frac{-1}{\pi}\im\Big(\frac{1}{x+\ri v_k(x)}\Big)
=\frac{v_k(x)}{\pi(x^2+v_k(x)^2)}.
\]
In particular we see that $f_{\nu_k}(\xi)>0$ for any $\xi$ in $\R$. Denoting by $P_k^\brinv$ the inverse of $P_k$,
we note finally that
\[
f_{\nu_k}(\xi)
=\frac{v_k(P_k^\brinv(\xi))}{\pi(P_k^{\brinv}(\xi)^2+v_k(P_k^\brinv(\xi))^2)} 
\qquad(\xi\in\R),
\]
which via the continuity of $P_k^\brinv$ and $v_k$ shows that
$f_{\nu_k}$ is continuous too. 
\end{proof}

\begin{remark}
Corollary \ref{density} is a special case of Huang's density formula
for freely infinitely divisible distributions \cite[Theorem
3.10]{Hu2}, which does not impose any assumptions on the L\'evy
measure. Our approach is similar to that of Biane in \cite{B97}. For
example his function $\psi_t$ resembles our function $P_k$.  
\end{remark}  

The next lemma is key to the main result on unimodality.  

\begin{lemma}\label{key-lemma}
Consider the function $F_k$ defined by \eqref{eq2}.
Then for any $r$ in $(0,\infty)$ there exists a number $\theta_r$ in
$(0,\pi)$ such that the function 
$$
\theta\mapsto
F_k(r\sin(\theta)\e^{\ri\theta})
$$ 
is strictly decreasing on $(0,\theta_r]$ and
strictly increasing on $[\theta_r,\pi)$.
\end{lemma}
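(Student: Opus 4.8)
The plan is to analyze the real-analytic function $\phi(\theta):=F_k(z(\theta))$ on $(0,\pi)$, where $z(\theta)=r\sin\theta\,\e^{\ri\theta}$ parametrizes (at ray-angle $\theta$ from the origin) the circle through $0$ and $\ri r$ that is tangent to $\R$ at $0$. Two elementary features come first. Since $z(\theta)\to0$ as $\theta\to0^+$ and as $\theta\to\pi^-$, and since $k$ is bounded and strictly positive, the density $|t|k(t)/|z-t|^2$ forces $F_k(z)\to\infty$ as $z\to0$; hence $\phi(\theta)\to+\infty$ at both endpoints. I would also record $z'(\theta)=r\e^{2\ri\theta}$ (the circle is traversed at constant speed $r$) and $z''(\theta)=2\ri r\e^{2\ri\theta}$.

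The logical backbone is a reduction: it suffices to prove that \emph{every} critical point of $\phi$ in $(0,\pi)$ is a strict local minimum. Indeed, critical points are then isolated and $\phi'$ changes sign $-\!\to\!+$ at each of them; two such sign changes would require an intervening $+\!\to\!-$ change, i.e.\ a local maximum, which is excluded. Thus there is at most one critical point, and since $\phi\to+\infty$ at both ends a global minimum exists in the interior; calling it $\theta_r$, the function $\phi$ is then strictly decreasing on $(0,\theta_r]$ and strictly increasing on $[\theta_r,\pi)$, which is the assertion.

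For the second-derivative test I would use the identity, valid for any real $C^2$ function restricted to this curve,
\[
\phi''(\theta)=\tfrac{r^2}{2}\,\Delta F_k(z(\theta))+2\,\re\big[z'(\theta)^2\,\partial_z^2F_k(z)+z''(\theta)\,\partial_zF_k(z)\big].
\]
The first term is $\ge 0$ because $F_k(z)=\int_\R\frac{|t|k(t)}{|z-t|^2}\6t$ is subharmonic on $\C^+$ (each kernel satisfies $\Delta\frac{1}{|z-t|^2}=\frac{4}{|z-t|^4}>0$ there and $|t|k(t)\ge0$). Writing $F_k=\im\Xi/\im z$ with the Pick function $\Xi(z):=\int_\R\frac{|t|k(t)}{t-z}\6t=z+\gamma_k-H_k(z)$, a direct Wirtinger computation expresses $\partial_zF_k$ and $\partial_z^2F_k$ through $\Xi'(z)$ and $\Xi''(z)$, while $\im\Xi(z)=\im z\cdot\phi(\theta)$. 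The condition $\phi'(\theta)=0$ then simplifies to the linear relation
\[
\im\!\big[\e^{2\ri\theta}\Xi'(z(\theta))\big]=\phi(\theta)\sin2\theta,
\]
which I would substitute to eliminate the first-order data and reduce $\phi''(\theta)$ to an inequality relating $\phi(\theta)$, the nonnegative quantity $\Delta F_k$, and the $\Xi''$-contribution.

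The hard part is the sign of the bracketed (non-subharmonic) term at a critical point. The naive hope that $\theta\mapsto|z(\theta)-t|^{-2}$ is convex is false: for fixed $t$ this function has both a local maximum and a local minimum on the circle (a point mass would make $\phi$ non-unimodal), so no termwise estimate can succeed and the monotonicity of $k$ must enter essentially. The mechanism I would exploit is the reflection symmetry $|z(\theta)-(-t)|^2=|z(\pi-\theta)-t|^2$ coming from the explicit form of $|z(\theta)-t|^2$, combined with the layer-cake representation $k=\int_0^\infty\mathbf 1_{(p_\lambda,q_\lambda)}\,\6\lambda$ whose super-level sets $(p_\lambda,q_\lambda)$ are intervals containing $0$ (because $k$ increases on $(-\infty,0)$ and decreases on $(0,\infty)$). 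Splitting $F_k$ into its $t>0$ and $t<0$ parts and pairing $\theta$ with $\pi-\theta$, the troublesome large-$|t|$ contributions that defeat termwise convexity cancel at leading order, and the remainder is controlled by the decay from hypothesis (a), namely boundedness of $(1+t^2)^2k(t)$. Turning this cancellation into the quantitative bound $\phi''>0$ at every critical point is the crux of the proof.
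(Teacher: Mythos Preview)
Your reduction is valid: showing that every critical point of $\phi(\theta)=F_k(r\sin\theta\,\e^{\ri\theta})$ is a strict local minimum would indeed yield the lemma. But the proposal stops exactly at the point where the real work begins. You identify the ``crux'' as turning the layer-cake/reflection cancellation into the inequality $\phi''(\theta_0)>0$ at a critical point $\theta_0$, and you do not carry this out. The reflection identity $|z(\theta)+t|=|z(\pi-\theta)-t|$ relates values of $\phi$ at $\theta$ and $\pi-\theta$, not the second derivative at a single $\theta_0$; it is not evident how pairing positive and negative $t$ in this way produces a sign for $\phi''(\theta_0)$. Likewise, writing $k=\int_0^\infty\mathbf 1_{(p_\lambda,q_\lambda)}\,\d\lambda$ expresses $\phi$ as a superposition of functions each of which (as you yourself note) is \emph{not} convex in $\theta$; since different layers have different critical points, positivity of $\phi''$ at $\theta_0$ does not reduce to a termwise statement. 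Nothing you have written indicates how the monotonicity of $k$ enters quantitatively, and the appeal to boundedness of $(1+t^2)^2k(t)$ to control a ``remainder'' is unmotivated without an explicit expansion to control. As written, this is a plan with the decisive estimate missing.

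For comparison, the paper avoids complex-analytic machinery entirely. The substitution $t=(r\sin\theta)u$ followed by the change of variable $x=\cos\theta$ converts $\phi$ into $\Psi_r(x)=\psi_{k_r^+}(x)+\psi_{k_r^-}(-x)$, where $\psi_h(x)=\int_0^\infty\frac{u}{1-2xu+u^2}\,h(u\sqrt{1-x^2})\,\d u$ for decreasing $h$. One then proves, by direct differentiation and a single integration by parts (this is where $h'\le0$ is used), three concrete sign statements: $\psi_h'>0$ on $(0,1)$, $\psi_h'<0$ on $(-1,-\tfrac{\sqrt2}{2}]$, and $\psi_h''>0$ on $[-\tfrac{\sqrt3}{2},\tfrac{\sqrt3}{2}]$. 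These combine to give $\Psi_r'>0$ for $x\ge\tfrac{\sqrt2}{2}$, $\Psi_r'<0$ for $x\le-\tfrac{\sqrt2}{2}$, and $\Psi_r''>0$ on $[-\tfrac{\sqrt3}{2},\tfrac{\sqrt3}{2}]$, forcing a unique zero of $\Psi_r'$. The point is that the change of variables separates the $t>0$ and $t<0$ contributions cleanly and reduces everything to one-variable calculus on explicit intervals; no cancellation between layers or critical-point algebra is needed.
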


\begin{proof}
We introduce a new variable $u$ by setting $t=(r\sin\theta)u$. Then  
$$
F_k(r\sin(\theta)\e^{\ri\theta})= \int_{\R}\frac{|u| k(ru \sin\theta)}{1-2u\cos\theta +u^2}\6u,\qquad (\theta\in(0,\pi)). 
$$
Now consider any decreasing function $h\colon(0,\infty)\to(0,\infty)$ from
$C^2((0,\infty))$ satisfying that the functions $(1+t^2)^m h^{(n)}(t)$
are bounded for any
$m,n$ in $\{0,1,2\}$. These assumptions ensure in particular that we may
define $\psi_h\colon(-1,1)\to\R$ by
$$
\psi_h(x):=\int_0^\infty \frac{u}{1-2xu+u^2}h(u\sqrt{1-x^2})\6u,\qquad
(x\in(-1,1)).  
$$
Note then that if we define $k_r^\pm(u):=k(\pm ru)$ for $u$ in $(0,\infty)$, and
\begin{equation}
\Psi_r(x)=\psi_{k_r^+}(x)+\psi_{k_r^-}(-x), \qquad(x\in(-1,1)),
\label{eq_def_Phi}
\end{equation}
then it holds that
\begin{equation}\label{eq+-}
F_k(r\sin(\theta)\e^{\ri\theta})=\Psi_r(\cos\theta), \qquad(\theta\in(0,\pi)).
\end{equation}
We show in the following that
\begin{enumerate}[1]
\item\label{1} $\psi_h'(x)>0$ for $x$ in $(0,1)$,
\item\label{2} $\psi_h'(x)<0$ for $x$ in $(-1,-\frac{\sqrt{2}}{2}]$, 
\item\label{3} $\psi_h''(x)>0$ for $x$ in
  $[-\frac{\sqrt{3}}{2},\frac{\sqrt{3}}{2}]$. 
\end{enumerate}
Before establishing these conditions we remark that the assumptions on
$h$ ensure, that we may perform differentiation under the integral sign
and integration by parts as needed in the following, and we shall do
so without further notice.

For any $x$ in $(-1,1)$ we note first by differentiation under the
integral sign that
\begin{equation}
\begin{split}
\psi_h'(x)
&= \int_0^\infty \frac{2u^2}{(1-2ux+u^2)^2}\,h(u\sqrt{1-x^2})\6u \\
&~~~-\int_0^\infty \frac{u^2}{1-2ux+u^2}\cdot\frac{x}{\sqrt{1-x^2}}\,
h'(u\sqrt{1-x^2})\6u,  
\end{split}
\end{equation}
which shows that \ref{1} holds. Moreover, integration by parts yields that  
\begin{equation}
\begin{split}
\psi_h'(x)
&= \int_0^\infty \frac{2u^2}{(1-2ux+u^2)^2}\,h(u\sqrt{1-x^2})\6u
\\
&~~~+\int_0^\infty \frac{\partial}{\partial u}
\left(\frac{u^2}{1-2ux+u^2}\right)\cdot\frac{x}{1-x^2}\,h(u\sqrt{1-x^2})\6u
\\
&=\int_0^\infty\frac{2u((1-2x^2)u+x)}{(1-2xu+u^2)^2(1-x^2)}\,h(u\sqrt{1-x^2})\6u,
\end{split}
\end{equation}
which verifies \ref{2}. 

Finally, we proceed to compute $\psi_h''(x)$. Using Leibniz' formula we
find that
\begin{equation}
\begin{split}
\psi_h''(x)
&= \int_0^\infty \frac{8u^3}{(1-2ux+u^2)^3}\,h(u\sqrt{1-x^2})\6u \\ 
&~~~~-\int_0^\infty \frac{4u^3}{(1-2ux+u^2)^2}\cdot\frac{x}{\sqrt{1-x^2}}\,h'(u\sqrt{1-x^2})\6u \\ 
&~~~~  -\int_0^\infty \frac{u^2}{1-2ux+u^2}\left(\frac{1}{\sqrt{1-x^2}} +\frac{x^2}{(1-x^2)^{3/2}}\right)\,h'(u\sqrt{1-x^2})\6u\\
&~~~~ +  \int_0^\infty \frac{u^3}{1-2ux+u^2}\cdot\frac{x^2}{1-x^2}\,h''(u\sqrt{1-x^2})\6u \\
&= \int_0^\infty \frac{8u^3}{(1-2ux+u^2)^3}\,h(u\sqrt{1-x^2})\6u \\ 
&~~~~-\int_0^\infty \frac{u^2(1+2ux+u^2)}{(1-2ux+u^2)^2\sqrt{1-x^2}}\,h'(u\sqrt{1-x^2})\6u \\ 
&~~~~  -\int_0^\infty \frac{u^2}{1-2ux+u^2}\cdot\frac{x^2}{(1-x^2)^{3/2}}\,h'(u\sqrt{1-x^2})\6u\\
&~~~~ +  \int_0^\infty \frac{u^3}{1-2ux+u^2}\cdot\frac{x^2}{1-x^2}\,h''(u\sqrt{1-x^2})\6u.  
\end{split}
\end{equation}
In the resulting expression above the first three integrals are
positive for any $x$ in $(-1,1)$, since $-h',h\ge0$ and $u^2+2ux+1 = (u+x)^2+1-x^2 \geq 0$.
By integration by parts, the last integral can be re-written as follows: 
\begin{equation}\label{parts}
\begin{split}
& \int_0^\infty \frac{u^3}{1-2ux+u^2}
\cdot\frac{x^2}{1-x^2}\,h''(u\sqrt{1-x^2})\6u \\
 &~~~~=-\frac{x^2}{(1-x^2)^{3/2}}\int_0^\infty\frac{\partial}{\partial
   u}\left( \frac{u^3}{1-2xu+u^2}\right) \cdot h'(u\sqrt{1-x^2})\6u\\ 
 &~~~~=   -\frac{x^2}{(1-x^2)^{3/2}}\int_0^\infty
 \frac{u^2}{(1-2xu+u^2)^2}
\left((u-2x)^2+3-4x^2\right) h'(u\sqrt{1-x^2})\6u.
\end{split}
\end{equation}
Hence this integral is positive as well for any $x$ in
$[-\frac{\sqrt{3}}{2},\frac{\sqrt{3}}{2}]$, and altogether the property
\ref{3} is established.

Recalling now formula \eqref{eq_def_Phi}, note that 
it follows from conditions
\ref{1}-\ref{3} that
$\Psi_r'(x) = \psi_{k_r^+}'(x)-\psi_{k_r^-}'(-x) >0$, if $x\geq
\frac{\sqrt{2}}{2}$, $\Psi_r'(x) <0$, if $x\leq- \frac{\sqrt{2}}{2}$,
and $\Psi_r''(x) = \psi_{k_r^+}''(x)+\psi_{k_r^-}''(-x) >0$, if $|x|\leq
\frac{\sqrt{3}}{2}$.   
Hence, $\Psi_r '$ is strictly increasing on 
$(-\frac{\sqrt{3}}{2}, \frac{\sqrt{3}}{2})$ and there exists a unique
zero of $\Psi_r'$ at some $x_r$ in $(-\frac{\sqrt{2}}{2},
\frac{\sqrt{2}}{2})$. Therefore $\Psi_r$ is strictly decreasing on
$(-1,x_r]$ and strictly increasing on $[x_r,1)$, and the lemma now
follows readily from formula \eqref{eq+-}.
\end{proof}

\begin{proposition}\label{unimodality_I}
Consider a function $k\colon\R\setminus\{0\}\to[0,\infty)$ which
satisfies conditions (a)-(c) listed in the beginning of this section.
Then the associated measure $\nu_k$ (described
in Lemma~\ref{formel_for_Cauchytransform}) is unimodal. 
In fact there exists a number $\omega$ in $\R$,
such that the density $f_{\nu_k}$ (cf.\ Corollary~\ref{density})
is strictly increasing on $(-\infty,\omega]$ and
strictly decreasing on $[\omega,\infty)$.
\end{proposition}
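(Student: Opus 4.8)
The plan is to reduce unimodality of $f_{\nu_k}$ to a quasiconvexity statement about a single real function of one variable, and then to feed Lemma~\ref{key-lemma} into that statement. Since $P_k$ is a strictly increasing homeomorphism of $\R$ (Lemma~\ref{P_is_homeomorphism}) and $f_{\nu_k}(P_k(x))=\frac{v_k(x)}{\pi(x^2+v_k(x)^2)}$ (Corollary~\ref{density}), the density $f_{\nu_k}$ is unimodal on $\R$ if and only if $g(x):=\frac{v_k(x)}{\pi(x^2+v_k(x)^2)}$ is strictly increasing on some half-line $(-\infty,x_0]$ and strictly decreasing on $[x_0,\infty)$; the mode is then $\omega=P_k(x_0)$. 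First I would record the geometric observation underlying Lemma~\ref{key-lemma}: for $\theta\in(0,\pi)$ the points $r\sin\theta\,\e^{\ri\theta}$ trace out the circle through $0$ with centre $\tfrac{\ri r}{2}$ and radius $\tfrac{r}{2}$, and these circles foliate $\C^+$; writing a point $z\in\C^+$ as $z=r\sin\theta\,\e^{\ri\theta}$ with $\theta=\arg z$ and $r=|z|^2/\im(z)$, one has identically $g=\frac{1}{\pi r}$ along $\CG$ (since $\im z=v_k(x)$ and $|z|^2=x^2+v_k(x)^2$). Thus $g$ depends on the point $z=x+\ri v_k(x)$ of $\CG$ only through its circle-parameter $r$.

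Next I would parametrize $\CG$ by $\theta$ rather than by $x$. Set $\Phi(r,\theta):=F_k(r\sin\theta\,\e^{\ri\theta})$. The crucial monotonicity is that for each fixed $\theta\in(0,\pi)$ the map $r\mapsto\Phi(r,\theta)$ is strictly decreasing, with limits $+\infty$ as $r\to0$ and $0$ as $r\to\infty$. This follows from the substitution $t=(r\sin\theta)u$ already used in the proof of Lemma~\ref{key-lemma}, which gives $\Phi(r,\theta)=\int_\R\frac{|u|k(ru\sin\theta)}{(u-\cos\theta)^2+\sin^2\theta}\6u$: differentiating in $r$ and invoking condition (b) (so that $u$ and $k'(ru\sin\theta)$ have opposite signs) makes the integrand of $\partial_r\Phi$ nonpositive, and strictly negative on a set of positive measure because $k$ is nonconstant; the limiting values are obtained as in Lemma~\ref{intro_v}(i). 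Consequently, for each $\theta$ there is a unique $R(\theta)\in(0,\infty)$ with $\Phi(R(\theta),\theta)=1$, and $R(\theta)$ is exactly the circle-parameter of the unique intersection of the ray $\arg z=\theta$ with $\CG$; in these coordinates $g=\frac{1}{\pi R(\theta)}$.

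The heart of the argument is then to show that $R$ is strictly quasiconvex: strictly decreasing on $(0,\theta_0]$ and strictly increasing on $[\theta_0,\pi)$ for a unique $\theta_0$. Because $r\mapsto\Phi(r,\theta)$ is strictly decreasing, the sublevel sets are
\[
\{\theta\in(0,\pi)\mid R(\theta)\le\rho\}=\{\theta\in(0,\pi)\mid \Phi(\rho,\theta)\le1\},\qquad(\rho>0),
\]
and by Lemma~\ref{key-lemma} the function $\theta\mapsto\Phi(\rho,\theta)$ is strictly decreasing then strictly increasing, so each such sublevel set is an interval; hence $R$ is quasiconvex. A plateau $R\equiv\rho$ on a nondegenerate interval is impossible, since it would force $\Phi(\rho,\cdot)\equiv1$ there, whereas a strictly-decreasing-then-increasing function attains the value $1$ at most twice. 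Together with $R(\theta)\to\infty$ as $\theta\to0^+,\pi^-$ (which follows from Lemma~\ref{intro_v}(v), as these limits correspond to $|x|\to\infty$ along $\CG$) and continuity of $R$, this yields strict unimodality of $R$ with a unique minimizer $\theta_0$. Finally I would transfer back: $x\mapsto\theta(x)=\arg(x+\ri v_k(x))$ is a continuous bijection of $\R$ onto $(0,\pi)$ (each ray meets the graph $\CG$ exactly once), hence a strictly decreasing homeomorphism, so $g(x)=\frac{1}{\pi R(\theta(x))}$ is strictly increasing then strictly decreasing, with mode at $x_0=\theta^{-1}(\theta_0)$.

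The main obstacle I expect is the strict radial monotonicity $\partial_r\Phi<0$ of the second paragraph: this is the single place where the defining monotonicity (b) of $k$ (i.e.\ $\boxplus$-selfdecomposability) is used to pass from Lemma~\ref{key-lemma}, which only controls the \emph{angular} behaviour of $F_k$ on each circle, to the \emph{radial} control needed to define $R$ and to compare it across level curves. Everything after that is soft: the reduction to quasiconvexity of a one-variable function and the level-set argument are purely order-theoretic once the two monotonicities — radial strict monotonicity and the angular statement of Lemma~\ref{key-lemma} — are in hand.
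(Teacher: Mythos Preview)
Your argument is correct, but the paper reaches the conclusion by a shorter route that avoids the radial monotonicity step entirely. The paper observes, as you do, that the level set $\{f_{\nu_k}=\rho\}$ corresponds (via $P_k$ and Corollary~\ref{density}) to $\CG\cap C_\rho$, where $C_\rho=\{\tfrac{1}{\pi\rho}\sin\theta\,\e^{\ri\theta}:\theta\in(0,\pi)\}$; Lemma~\ref{key-lemma} then says directly that $\theta\mapsto F_k(\tfrac{1}{\pi\rho}\sin\theta\,\e^{\ri\theta})$ is strictly decreasing then strictly increasing, so the equation $F_k=1$ has at most two solutions on $C_\rho$, hence $f_{\nu_k}(\xi)=\rho$ has at most two solutions for every $\rho>0$. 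A short continuity argument (a global maximum exists since $f_{\nu_k}$ is continuous, strictly positive, and vanishes at $\pm\infty$; three solutions would arise from any failure of monotonicity on either side of it) then gives strict unimodality. Your approach instead reparametrizes $\CG$ by the angle $\theta$, which forces you to prove that each ray meets $\CG$ exactly once; this is your radial monotonicity $\partial_r\Phi<0$, an additional use of condition~(b) beyond its role inside Lemma~\ref{key-lemma}. That lemma is correct and the quasiconvexity transfer through $R(\theta)$ and $\theta(x)$ is clean, and it does yield a slightly more explicit location of the mode; but the paper shows that the angular statement of Lemma~\ref{key-lemma} alone already suffices, so the radial control you flag as ``the main obstacle'' is in fact dispensable.
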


\begin{proof} We show first for any number $\rho$ in $(0,\infty)$
 that the equality $f_{\nu_k}(\xi)=\rho$ has at most two solutions in
  $\xi$. Since $P_k$ is a bijection of $\R$ onto itself, this is
  equivalent to showing that the equality
\[
\rho=f_{\nu_k}(P_k(x))=\frac{v_k(x)}{\pi(x^2+v_k(x)^2)}
\]
has at most two solutions in $x$. For this we note first that
\[
\big\{x+\ri y\in\C^+\bigm| \tfrac{y}{\pi(x^2+y^2)}=\rho\big\}
=C_\rho\setminus\{0\},
\]
where $C_\rho$ is the circle in $\C$ with center
$\frac{\ri}{2\pi\rho}$ and radius 
$\frac{1}{2\pi\rho}$. Writing $x+\ri y$ as
$r\e^{\ri\theta}$ ($r>0$, $\theta\in(-\pi,\pi]$) we find that $C_\rho$ is given by
\[
C_\rho=\big\{\tfrac{1}{\pi\rho}\sin(\theta)\e^{\ri\theta}\bigm|
\theta\in(0,\pi]\big\}
\]
in polar coordinates. We need to show that $C_\rho$ intersects the
graph $\CG$ of $v_k$ in 
at most two points. By the defining property \eqref{eq3} of $v_k$, this
is equivalent to showing that the equality
\[
F_k\big(\tfrac{1}{\pi\rho}\sin(\theta)\e^{\ri\theta}\big)=1
\]
has at most two solutions for $\theta$ in $(0,\pi)$. But this follows
immediately from Lemma~\ref{key-lemma}.

It is now elementary to check that $\nu_k$ is unimodal. Since
$f_{\nu_k}$ is continuous and strictly positive on $\R$, and since
$f_{\nu_k}(x)\to0$ as $x\to\pm\infty$ (cf.\ Corollary~\ref{density}),
$f_{\nu_k}$ attains a strictly positive global maximum at
some point $\omega$ in $\R$. If $f_{\nu_k}$ was not
increasing on $(-\infty,\omega]$, then we could choose $\xi_1,\xi_2$ in
$(-\infty,\omega)$ such that $\xi_1<\xi_2$, and
$f_{\nu_k}(\xi_1)>f_{\nu_k}(\xi_2)>0$. Choosing any number $\rho$ in
$(f(\xi_2),f(\xi_1))$, it follows then from the continuity of
$f_{\nu_k}$, that each of the intervals $(-\infty,\xi_1)$, $(\xi_1,\xi_2)$
and $(\xi_2,\omega)$ must contain a solution to the equation
$f_{\nu_k}(\xi)=\rho$, which contradicts what we established above.
Subsequently the argumentation given above also implies
that $f_{\nu_k}$ is in fact \emph{strictly} increasing on
$(-\infty,\omega]$. Similarly it follows that $f_{\nu_k}$ must be
strictly decreasing on $[\omega,\infty)$, and this completes the proof.
\end{proof}

\section{The general case}\label{general_case}

In this section we extend Proposition~\ref{unimodality_I} to general
measures $\nu$ from $\CL(\boxplus)$. The key step is the following
approximation result. 

\begin{lemma}\label{approximation_lemma}
Let $k\colon\R\setminus\{0\}\to[0,\infty)$ be a
  function as in \eqref{spectral} such that $\frac{k(t)}{|t|}1_{\R\setminus\{0\}}(t)\6t$ is a L\'evy
  measure. Let further $a$ be a non-negative number.

  Then there exists a sequence $(k_n)$ of functions
  $k_n\colon\R\setminus\{0\}\to[0,\infty)$,
  satisfying the conditions (a)-(c) in Section \ref{compact_case}, such that
\[
\frac{|t|k_n(t)}{1+t^2}\6t\overset{\rm w}{\longrightarrow}
a\delta_0+\frac{|t|k(t)}{1+t^2}\6t
\]
as $n\to\infty$.
\end{lemma}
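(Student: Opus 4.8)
The goal is to approximate the measure $a\delta_0+\frac{|t|k(t)}{1+t^2}\6t$ weakly by measures $\frac{|t|k_n(t)}{1+t^2}\6t$ where each $k_n$ satisfies the stringent conditions (a)-(c): namely $C^2$-smoothness with bounded weighted derivatives, monotonicity (increasing on $(-\infty,0)$, decreasing on $(0,\infty)$), and \emph{strict} positivity on all of $\R\setminus\{0\}$. The plan is to build $k_n$ from $k$ by a three-step procedure: first regularize $k$ to make it smooth while preserving monotonicity, then add a small decaying strictly-positive bump to guarantee condition (c), and finally engineer the approximation so that the extra mass accumulating near $0$ reproduces the atom $a\delta_0$ in the weak limit.

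First I would handle the atom. Since $\frac{|t|}{1+t^2}\to 0$ as $t\to 0$, to create a Dirac mass $a\delta_0$ in the limit I must let $k_n$ blow up appropriately near the origin. Concretely, I would add to (a smoothed version of) $k$ a term like $c_n g(t/\delta_n)/|t|$ for a suitable fixed profile $g$, chosen so that $\int \frac{|t|}{1+t^2}\cdot\frac{c_n g(t/\delta_n)}{|t|}\6t \to a$ while the mass concentrates at $0$; monotonicity of this added term in $t$ is automatic if $g$ is taken even and unimodal and $c_n/|t|$ type factors are decreasing. The smoothing of $k$ itself would be done by convolving on the two half-lines separately (to avoid destroying the one-sided monotonicity across $0$) with a smooth mollifier, then checking that the weighted-derivative bounds in (a) hold; mollification improves smoothness without breaking monotonicity, and the decay/boundedness conditions follow because the original $\frac{k(t)}{|t|}$ is a L\'evy measure so $k$ already has the requisite integrability and decay.

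For the strict positivity (c), I would simply add a fixed strictly positive, $C^2$, monotone, rapidly decaying function such as $\ep_n e^{-t^2}$ (or $\ep_n/(1+t^2)$ arranged to be monotone on each half-line) with $\ep_n\searrow 0$; this contributes mass $\int\frac{|t|}{1+t^2}\ep_n e^{-t^2}\6t\to 0$, so it does not affect the weak limit, while ensuring $k_n>0$ everywhere. One must verify that the sum of the smoothed $k$, the atom-producing term, and the positivity bump still satisfies the monotonicity (b): this holds because each summand is individually increasing on $(-\infty,0)$ and decreasing on $(0,\infty)$, and these properties are preserved under addition.

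The main obstacle I expect is the \textbf{interaction between conditions (a) and (c) near the origin together with the atom construction}: the atom-producing term must blow up near $0$ (to concentrate mass there after multiplication by the vanishing factor $\frac{|t|}{1+t^2}$), yet condition (a) demands that $k_n$ and its derivatives be \emph{bounded} (indeed that $(1+t^2)^m k_n^{(n)}$ be bounded). These are in tension, so the concentration of the atom cannot be achieved by letting $k_n$ grow unboundedly near $0$; instead I would reproduce the mass $a$ by making $k_n$ large but bounded on a shrinking neighborhood of $0$ — e.g.\ setting $k_n$ equal to a large constant $C_n$ on $[-\delta_n,\delta_n]$ (smoothly interpolated to $k$ outside), with $C_n\to\infty$ and $\delta_n\to 0$ tuned so that $\int_{-\delta_n}^{\delta_n}\frac{|t|}{1+t^2}C_n\6t\approx C_n\delta_n^2\to a$. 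Keeping $k_n$ bounded for each fixed $n$ (so (a) holds with $n$-dependent constants, which is all that is required) while driving the localized mass to $a$ is the delicate balancing act; verifying weak convergence then reduces to testing against continuous bounded functions and splitting the integral into a neighborhood of $0$ (giving the atom) and its complement (where $k_n\to k$ pointwise with a dominating integrable bound, so dominated convergence applies).
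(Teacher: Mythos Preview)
Your strategy matches the paper's: one-sided mollification of $k$ on each half-line to preserve monotonicity, a term that is bounded for each fixed $n$ but concentrates $\frac{|t|}{1+t^2}$-mass at the origin to produce $a\delta_0$, and an additive device for strict positivity. The paper in fact merges your second and third ingredients into a single rescaled Gaussian $a n^{2}e^{-n^{2}t^{2}}$, which is strictly positive, monotone on each half-line, and satisfies $\int_{\R}\frac{|t|}{1+t^{2}}\,a n^{2}e^{-n^{2}t^{2}}\,dt\to a$; your scaling $C_{n}\delta_{n}^{2}\to a$ is exactly this with $C_{n}=a n^{2}$, $\delta_{n}=1/n$.

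One point needs correcting. You write that ``the decay/boundedness conditions follow because the original $\frac{k(t)}{|t|}$ is a L\'evy measure,'' but the L\'evy condition only gives $\int_{0}^{1}|t|k(t)\,dt<\infty$, which permits $k(t)\sim|t|^{-\alpha}$ near $0$ for any $\alpha<2$. Mollifying such a $k$ directly will not produce a bounded function, so condition~(a) fails. The paper fixes this by first \emph{truncating}: on $(0,\infty)$ it replaces $k$ by $k_{n}^{0}$, equal to the constant $k(1/n)$ on $(0,1/n)$, to $k$ on $[1/n,n]$, and to $0$ beyond $n$, and only then convolves with a mollifier supported in $[-1/n,0]$. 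This capping step, which uses the monotonicity of $k$, is what makes each $k_{n}$ bounded with bounded derivatives; you should insert it before your mollification. With that amendment your outline is complete, and the verification of weak convergence proceeds exactly as you describe, by dominated convergence on the complement of a neighbourhood of $0$ (using $R_{n}\le k$ a.e.) and the scaling computation near $0$.
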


\begin{proof}
For each $n$ in $\N$ we introduce first the function
$k_n^0\colon\R\to[0,\infty)$ defined by
\[
k_n^0(t)=
\begin{cases}
0,& \text{if~}t\in(-\infty,0],\\
k(\tfrac{1}{n}), &\text{if $t\in(0,\frac{1}{n})$,}\\
k(t), &\text{if $t\in[\frac{1}{n},n]$,}\\
0, &\text{if $t\in(n,\infty)$},
\end{cases}
\]
and we note that $k_n^0\le k_{n+1}^0$ for all $n$.
Next we choose a non-negative function $\phi$ from $C^{\infty}_c(\R)$, such that 
$\supp(\phi)\subseteq[-1,0]$, and
$\int_{-1}^0\phi(t)\6t=1$. We then define the function
$\tilde{R}_n\colon \R \to[0,\infty)$ as the convolution
\begin{equation}
\tilde{R}_n(t)=n\int_{-1/n}^0k_n^0(t-s)\phi(ns)\6s
=\int_0^1k_n^0(t+\tfrac{u}{n})\phi(-u)\6u, \qquad(t\in\R), 
\label{eq8}
\end{equation}
and we let $R_n$ be the restriction of $\tilde{R}_n$ to $(0,\infty)$. 
Note also that
\[
\tilde{R}_n(t)=n\int_{\R}\phi(n(t-s))k_n^0(s)\6s, \qquad(t\in\R).
\]
Since $k_n^0$ as well as the derivatives of $\phi$ and $\phi$ itself
are all bounded
functions, it follows then by differentiation under the integral sign that
 $\tilde{R}_n$ is a \emph{bounded} $C^{\infty}$-function on $\R$ with
 \emph{bounded} derivatives, and so its restriction $R_n$ to
 $(0,\infty)$ has bounded derivatives too. 
Since $k_n^0$ is decreasing on $(0,\infty)$, it follows
immediately from \eqref{eq8} that so is $R_n$. 
Moreover, $\supp(R_n)\subseteq(0,n]$ by the
definition of $k_n^0$. 

For any $t$ in $(0,\infty)$ and $n$ in $\N$ note next that
\[
R_n(t)\le\int_0^1k_{n+1}^0(t+\tfrac{u}{n})\phi(-u)\6u
\le\int_0^1k_{n+1}^0(t+\tfrac{u}{n+1})\phi(-u)\6u
=R_{n+1}(t).
\]
Moreover, the monotonicity assumptions imply that $k$ is continuous at
almost all $t$ in $(0,\infty)$ (with respect to Lebesgue measure). For
such a $t$ we may further consider $n$ so
large that $t+\frac{u}{n}\in[\frac{1}{n},n]$ for all $u$ in
$[0,1]$. For such $n$ it follows then that
\[
R_n(t)=\int_0^1k(t+\tfrac{u}{n})\phi(-u)\6u
\underset{n\to\infty}{\longrightarrow}
\int_0^1k(t)\phi(-u)\6u=k(t)
\]
by monotone convergence. We conclude that $R_n(t)\nearrow k(t)$ as
$n\to\infty$ for almost all $t$ in $(0,\infty)$. 

Applying the considerations above to the function
$\kappa\colon(0,\infty)\to[0,\infty)$ given by $\kappa(t)=k(-t)$, it
follows that we may construct a sequence 
$(L_n)_{n\in\N}$ of non-negative functions defined on $(-\infty,0)$
and with the following properties:

\begin{itemize}

\item For all $n$ in $\N$ the function $L_n$ has bounded support.

\item For all $n$ in $\N$ we have that $L_n\in C^{\infty}((-\infty,0))$,
  and $L_n^{(p)}$ is bounded for all $p$ in $\N\cup\{0\}$.

\item For all $n$ in $\N$ the function $L_n$ is increasing on
  $(-\infty,0)$.

\item $L_n(t)\nearrow k(t)$ as $n\to\infty$ for almost all $t$ in
  $(-\infty,0)$ (with respect to Lebesgue measure).

\end{itemize}

Next let $\psi(t)=\e^{- t^2}$, and 
note that $\int_{\R}|t|\psi(t)\6t=1$.
We are then ready to define $k_n\colon\R\setminus\{0\}\to[0,\infty)$ by
\[
k_n(t)=
\begin{cases}
a n^2\psi(nt)+R_n(t), &\text{if $t>0$,}\\
a n^2\psi(nt)+L_n(t), &\text{if $t<0$.}
\end{cases}
\]
It is apparent from the argumentation above that $k_n$ satisfies
the conditions (a)-(c) in Section~\ref{compact_case}, and it remains to
show that $\frac{|t|k_n(t)}{1+t^2}\6t\overset{\rm w}{\to} 
a\delta_0+\frac{|t|k(t)}{1+t^2}\6t$ as $n\to\infty$. For any
bounded continuous function $g\colon\R\to\R$ we find that
\begin{equation*}
\begin{split}
\int_{\R}g(t)\frac{|t|k_n(t)}{1+t^2}\6t 
&=an^2\int_{\R}g(t)\frac{|t|\psi(nt)}{1+t^2}\6t 
+\int_{-\infty}^0g(t)\frac{|t|L_n(t)}{1+t^2}\6t
+\int_0^{\infty}g(t)\frac{tR_n(t)}{1+t^2}\6t
\\[.2cm]
&=a\int_{\R}g(\tfrac{u}{n})\frac{|u|\psi(u)}{1+(\frac{u}{n})^2}\6u
+\int_{-\infty}^0g(t)\frac{|t|L_n(t)}{1+t^2}\6t
+\int_0^{\infty}g(t)\frac{tR_n(t)}{1+t^2}\6t
\\[.2cm]
&\underset{n\to\infty}{\longrightarrow}
a\int_{\R}g(0)|u|\psi(u)\6u
+\int_{-\infty}^0g(t)\frac{|t|k(t)}{1+t^2}\6t
+\int_0^{\infty}g(t)\frac{tk(t)}{1+t^2}\6t
\\[.2cm]
&=ag(0)+\int_{\R}g(t)\frac{|t|k(t)}{1+t^2}\6t,
\end{split}
\end{equation*}
where, when letting $n\to\infty$, we used dominated convergence on
each of the three integrals; note in particular that
$\frac{|t|L_n(t)}{1+t^2}$ and $\frac{tR_n(t)}{1+t^2}$ are dominated
almost everywhere by $\frac{|t|k(t)}{1+t^2}$ on the relevant
intervals, and here $\int_{\R}\frac{|t|k(t)}{1+t^2}\6t<\infty$, since
$\frac{k(t)}{|t|}\6t$ is a L\'evy measure. This completes the proof.
\end{proof}

\begin{theorem}\label{main-res}
Any measure $\nu$ in $\CL(\boxplus)$ is unimodal.
\end{theorem}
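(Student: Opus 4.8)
The plan is to realize $\nu$ as a weak limit of the unimodal measures produced in Proposition~\ref{unimodality_I}, and then invoke the stability of unimodality under weak convergence recorded in \eqref{unimodal_vs_weak_conv}. First I would unpack the data attached to $\nu$. Since $\nu\in\CL(\boxplus)$, the characterization following \eqref{eq9} says that its free L\'evy measure has the form $\frac{k(t)}{|t|}\6t$ for some $k$ as in \eqref{spectral}, while as an element of $\ID(\boxplus)$ it also carries a semicircular part with coefficient $a\ge0$ and a first-order drift. Passing from the free characteristic triplet to the free generating pair $(\gamma,\sigma)$ by means of \eqref{ligning3}, one finds that the generating measure is exactly
\[
\sigma=a\delta_0+\frac{|t|k(t)}{1+t^2}\6t,
\]
which is precisely the weak limit appearing in Lemma~\ref{approximation_lemma}.

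Next I would apply Lemma~\ref{approximation_lemma} to this $k$ and this $a$, obtaining functions $k_n$ satisfying conditions (a)--(c) of Section~\ref{compact_case} with $\frac{|t|k_n(t)}{1+t^2}\6t\overset{\rm w}{\longrightarrow}\sigma$. Let $\nu_{k_n}$ be the associated measures of Lemma~\ref{formel_for_Cauchytransform}; these are unimodal by Proposition~\ref{unimodality_I}. Since each $\nu_{k_n}$ has vanishing semicircular part, its free generating pair is $(\gamma_n,\sigma_n)$ with $\sigma_n=\frac{|t|k_n(t)}{1+t^2}\6t$ and with $\gamma_n$ the real number determined through \eqref{ligning3} by the prescribed drift $\int_{-1}^1\tilde{k}_n(t)\6t$.

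The one point requiring care --- and the step I expect to be the real, if modest, obstacle --- is that Lemma~\ref{approximation_lemma} controls only the generating \emph{measures} $\sigma_n$, whereas the approximants $\nu_{k_n}$ come with their own first-order terms $\gamma_n$, which need not converge to $\gamma$. To remedy this I would translate: set $\hat\nu_n=\nu_{k_n}\boxplus\delta_{\gamma-\gamma_n}$. Because $\phi_{\delta_c}\equiv c$, free convolution with a point mass alters only the $\gamma$-entry of the generating pair and leaves $\sigma_n$ untouched, so $\hat\nu_n$ has free generating pair $(\gamma,\sigma_n)$; moreover $\hat\nu_n$ stays unimodal, since unimodality is manifestly invariant under translation (it merely shifts the mode $a$ in \eqref{UMa}). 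Now the free version of Gnedenko's theorem \eqref{free_Gnedenko_thm} applies with the constant sequence $\gamma\to\gamma$ together with $\sigma_n\overset{\rm w}{\longrightarrow}\sigma$, yielding $\hat\nu_n\overset{\rm w}{\longrightarrow}\nu$. Finally \eqref{unimodal_vs_weak_conv} transfers unimodality from the $\hat\nu_n$ to $\nu$, which completes the argument.
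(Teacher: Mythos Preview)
Your proof is correct and follows essentially the same approach as the paper: approximate the generating measure $\sigma$ via Lemma~\ref{approximation_lemma}, invoke Proposition~\ref{unimodality_I} for the approximants, apply the free Gnedenko theorem, and conclude by \eqref{unimodal_vs_weak_conv}. The only cosmetic difference is that the paper disposes of the $\gamma$-coordinate at the outset (observing that unimodality depends only on $\sigma$, so one may work with generating pairs $(0,\sigma_n)$ and $(0,\sigma)$), whereas you achieve the same end by explicitly translating each $\nu_{k_n}$ to force the constant sequence $\gamma_n'\equiv\gamma$.
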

\begin{proof} We note first that for any probability measure $\mu$ on
  $\R$ and any constant $a$ in $\R$, the free convolution
  $\mu\boxplus\delta_a$ is the translation of $\mu$ by the constant
  $a$, and hence $\mu$ is unimodal, if and only if
  $\mu\boxplus\delta_a$ is unimodal for some (and hence all) $a$ in
  $\R$. For $\boxplus$-infinitely divisible measures this means that
  the measure with free generating pair $(\gamma,\sigma)$ (cf.\
  \eqref{e1.1a}) is unimodal, if and only if the measure with free
  generating pair $(\gamma+a,\sigma)$ is unimodal for some (and
  hence all) $a$ in $\R$. In other words, unimodality depends only
  on the measure $\sigma$ appearing in the free generating pair.

Now let $\nu$ be a measure from $\CL(\boxplus)$ with free
characteristic triplet $(a,\frac{k(t)}{|t|}\6t,\eta)$, where
$a\ge0$, $\eta\in\R$ and $k\colon\R\setminus\{0\}\to[0,\infty)$ is a function as in \eqref{spectral}. According
to the discussion above, it suffices then to show that the measure $\nu^0$
with free generating pair $(0,a\delta_0+\frac{|t|k(t)}{1+t^2}\6t)$ is
unimodal (cf.\ \eqref{ligning3}). By application of
Lemma~\ref{approximation_lemma} we may  
choose a sequence $(k_n)$ of positive functions, satisfying (a)-(c)
in Section~\ref{compact_case}, such that
\begin{equation}
\frac{|t|k_n(t)}{1+t^2}\6t\overset{\rm w}{\longrightarrow}
a\delta_0+\frac{|t|k(t)}{1+t^2}\6t \quad\text{as $n\to\infty$.}
\label{eq7}
\end{equation}
For such $n$ it follows then from Proposition~\ref{unimodality_I} and
\eqref{ligning3} that the measure $\nu_n^0$ with free generating pair
$(0,\frac{|t|k_n(t)}{1+t^2}\6t)$ is unimodal. From \eqref{eq7} and the free
version of Gnedenko's Theorem (cf.\ \eqref{free_Gnedenko_thm})
it follows that $\nu_n^0\overset{\rm w}{\to}\nu^0$ as
$n\to\infty$, and hence \eqref{unimodal_vs_weak_conv} implies that
$\nu^0$ is unimodal, as desired.
\end{proof}

\begin{remark}
A non-degenerate classically selfdecomposable probability
  measure is absolutely continuous with respect to Lebesgue measure
  (see \cite[Theorem~27.13]{Sa}). In the free case it was proved by
  N.~Sakuma (see \cite{S11}) that non-degenerate freely
  selfdecomposable measures 
  have no atoms. By definition (see formula~\ref{UMa}), a unimodal measure 
  does not have a continuous singular part, and via
  Theorem~\ref{main-res} we may thus conclude that 
  also freely selfdecomposable measures are
  absolutely continuous with respect to the Lebesgue measure, unless
  they are degenerate. Moreover, from Huang's density formula 
\cite[Theorem 3.10 (6)] {Hu2}, which is a strengthened version of our
Corollary \ref{density}, one can show that the density 
function of a freely selfdecomposable measure is continuous on $\mathbb{R}$. 
By contrast, the density of a classical selfdecomposable measure may have a
single point of discontinuity (see \cite[Theorem~28.4]{Sa}).

\end{remark}

\subsection*{Acknowledgements}

This paper was initiated during the ``Workshop on Analytic, Stochastic,
and Operator Algebraic Aspects of Noncommutative Distributions and
Free Probability'' at the Fields Institute in July 2013. The authors
would like to express their sincere gratitude for the generous
support and the stimulating environment provided by the Fields
Institute. The authors would also like to thank an anonymous referee
for comments, which have improved the paper, and in particular for pointing out
connections between our paper and Biane's paper \cite{B97}. 

TH was supported by Marie Curie Actions -- International Incoming
Fellowships Project  328112 ICNCP.  

ST was partially supported by The Thiele Centre for Applied
Mathematics in Natural Science at The University of Aarhus.

{\small

}

\vspace{1.5cm}

\begin{minipage}[c]{0.5\textwidth}
Laboratoire de Math\'ematiques \\ 
Universit\'e de Franche-Comt\'e\\
16 route de Gray\\
25030 Besan\c{c}on cedex\\
France

\vspace{2mm}
\noindent
Present address: \\
Department of Mathematics \\
 Hokkaido University \\
   Kita 10, Nishi 8, Kita-ku \\
   Sapporo 060-0810 \\
    Japan\\
{\tt thasebe@math.sci.hokudai.ac.jp}
           
\end{minipage}
\hfill
\begin{minipage}[c]{0.5\textwidth}
Department of Mathematics\\
University of Aarhus\\
Ny Munkegade 118\\
8000 Aarhus C\\
Denmark\\
{\tt steenth@imf.au.dk}
\end{minipage}

\end{document}